\newtheorem{theorem}{Theorem}[section]
\newtheorem{lemma}[theorem]{Lemma}
\newtheorem{definition}[theorem]{Definition}
\theoremstyle{Corollary}
\newtheorem{cor}[theorem]{Corollary}
\newtheorem{prop}[theorem]{Proposition}
\numberwithin{equation}{section}
\begin{document}

\title{Deformation of the scalar curvature and the mean curvature}

\author{Pak Tung Ho}
\address{Department of Mathematics, Sogang University, Seoul, 04107, Korea}

\address{Korea Institute for Advanced Study, Hoegiro 87, Seoul 02455, Korea}

\email{paktungho@yahoo.com.hk}

\author{Yen-Chang Huang}

\address{Department of Applied Mathematics, National University of Tainan, Tainan 700, Taiwan}

\email{ychuang@mail.nutn.edu.tw}

\subjclass[2000]{Primary 53C20, 53C21; Secondary 53C24, 53C42}

\date{17th January, 2020.}

\begin{abstract}
On a compact manifold $M$ with boundary $\partial M$, we study the problem of  prescribing the scalar curvature in $M$ and
the mean curvature on the boundary $\partial M$ simultaneously.
To do this, we introduce the notion of singular metric, which
is inspired by the early work of Fischer-Marsden in \cite{Fischer&Marsden}
and Lin-Yuan in \cite{lin2016deformations} for closed manifold.
We show that we can prescribe
the scalar curvature and the mean curvature simultaneously
for generic scalar-flat manifolds with minimal boundary.
We also prove some rigidity results for the flat manifolds with
totally geodesic boundary.

\end{abstract}

\maketitle

\section{Introduction}

Suppose that $M$ is a compact smooth manifold with boundary $\partial M$.
There are two types of the Yamabe problem with boundary:
Given a smooth metric $g$ in $M$,
(i) find a metric conformal to $g$ such that its scalar curvature is constant in $M$ and its mean curvature is zero
on $\partial M$;
(ii) find a metric conformal to $g$ such that its scalar curvature is zero in $M$ and its mean curvature is constant
on $\partial M$.
The Yamabe problem with boundary has been studied by many authors.
See \cite{Brendle&Chen,Escobar1,Escobar2,Marques} and the references therein.

As a generalization of the Yamabe problem with boundary, one can consider the  prescribing curvature problem on manifolds with boundary:
Given a smooth metric $g$ in a compact manifold $M$ with boundary $\partial M$,
(i) find a metric conformal to $g$ such that its scalar curvature is equal to a given smooth function in $M$
and its mean curvature is zero on $\partial M$;
(ii) find a metric conformal to $g$ such that its scalar curvature is zero in $M$ and its mean curvature is
equal to a given smooth function on $\partial M$.
In particular, when the manifold is the unit ball,
it is the corresponding Nirenberg's problem for manifolds with boundary.
These have been studied extensively by many authors,
We refer the readers to \cite{Chen&Ho,Escobar3,Ho,Xu&Zhang}
and the references therein for results in this direction.

More generally, one can consider the prescribing curvature problem on manifolds with boundary
without restricting to a fixed conformal class:
(i) given a smooth function $f$ in $M$, find a metric $g$ such that
its scalar curvature is equal to $f$ and its mean curvature is zero, i.e.
$R_g=f$ in $M$ and  $H_g=0$ on $\partial M$;
(ii) given a smooth function $h$ on $\partial M$, find a metric $g$ such that
its scalar curvature is zero and its mean curvature is equal to $h$, i.e.
$R_g=0$ in $M$ and  $H_g=h$ on $\partial M$.
This was recently studied by
Cruz-Vit{\'o}rio
in \cite{cruz2019prescribing}.

In this paper, we study the problem of  prescribing the scalar curvature in $M$ and
the mean curvature on the boundary $\partial M$ \textit{simultaneously}.
More precisely, given a smooth function $f$ in $M$ and
a smooth function $h$ on $\partial M$,
we want to find a metric $g$ such that
its scalar curvature is equal to $f$ and its mean curvature is equal to $h$, i.e.
$R_g=f$ in $M$ and  $H_g=h$ on $\partial M$.
We would like to point out that there are several results in
 prescribing the scalar curvature in $M$ and
the mean curvature on the boundary $\partial M$ simultaneously \textit{in a fixed conformal class}.
See \cite{Chen&Ruan&Sun,Chen&Sun,Cruz,Escobar,Han&Li1,Han&Li2}.
The flow approach was introduced to study this problem
in \cite{Brendle,Zhang}
for dimension $2$ and in \cite{Chen&Ho&Sun} for higher dimensions.
However, without restricted to a fixed conformal class,  there are not many  results
in prescribing the scalar curvature and
the mean curvature on the boundary simultaneously.
So our paper can be viewed as the first step to understand this problem.

In order to study the problem of  prescribing the scalar curvature and
the mean curvature simultaneously, we study the linearization of
the   scalar curvature and the mean curvature.
We introduce the notion of singular space in section
\ref{section_2}.
This notion is inspired by the early work of Fischer-Marsden in \cite{Fischer&Marsden},
which studied the linearization of the scalar curvature in closed (i.e. compact without boundary) manifolds,
and the work of Lin-Yuan in \cite{lin2016deformations}
which studied the linearization of the $Q$-curvature in closed manifolds.
In section \ref{section_3}, we will show that some geometric properties of the manifold imply that it is singular (or not singular).
We then give some examples of singular space and non-singular space in section \ref{section_example}.
In
 section \ref{section_5}, we prove some
theorems related to prescribing the scalar curvature and
the mean curvature simultaneously.
Finally, in section \ref{last_section},
we prove some rigidity results for the flat manifolds with
totally geodesic boundary. See Theorem \ref{rigidity}.

\section{Characterization of singular spaces}\label{section_2}

For a compact $n$-dimensional manifold $M$ with boundary $\partial M$, let  $\mathcal{M}$ be
the moduli space of all smooth metrics defined in $M$. Denote the map
\begin{equation}\label{defn_Psi}
\begin{matrix}
\Psi: &\mathcal{M}& \longrightarrow &\mathbb{R}\times \mathbb{R}\\
      & g&  \longmapsto &(R_g,2 H_\gamma)
\end{matrix}
\end{equation}
where  $\gamma=g|_{\partial M}$ is the metric $g$ induced on $\partial M$,
$R_g$ is the scalar curvature in $M$ and $H_\gamma$ the mean curvature on $\partial M$ with respect to $g$.

Let $\mathcal{S}_g: S_2(M) \longrightarrow C^\infty(M)\times C^\infty (M)$ be the linearization of $\Psi$ at $g$,
and let $\mathcal{S}_g^*:C^\infty(M)\times C^\infty (M)\longrightarrow S_2(M)$ be the $L^2$-formal adjoint of $\mathcal{S}_g$, where $S_2(M)$
is the space of symmetric $2$-tensors on $M$.
More precisely, for any $h\in S_2(M)$, we have
\begin{align*}
\left.\frac{d}{dt}\Psi(g+th)\right|_{t=0}=\mathcal{S}_g(h)=D\Psi_g\cdot h=(\delta R_gh, 2 \delta H_\gamma h).
\end{align*}
It was computed in \cite{Araujo} and \cite{cruz2019prescribing} that
\begin{equation}\label{0.1}
\begin{split}
\delta R_g h &= −\Delta_g (tr_g h) + div_g div_g h -\langle h, Ric_g\rangle,\\
2 \delta H_\gamma h&=[d(tr_g h) - div_gh](\nu) - div_\gamma X - \langle\gamma , h\rangle_\gamma,
\end{split}
\end{equation}
where $\nu$ is the outward unit normal to $\partial M$,
$II_\gamma$ is the second fundamental form of $\partial M$, $X$ is
the vector field dual to the one-form $\omega(\cdot) = h(\cdot, \nu)$,
$tr_g h= g^{ij} h_{ij}$ is the trace of $h$ and our
convention for the Laplacian is $\Delta_g f = tr_g(\mbox{Hess}_g f)$.
Now $\mathcal{S}_g^*$,  the $L^2$-formal adjoint of $\mathcal{S}_g$, satisfies
\begin{equation*}
\begin{split}
&\left.\frac{d}{dt}\left(\int_M R_{g+th}f_1 dV_g +2\int_{\partial M} H_{\gamma+th}f_2 dA_\gamma \right)\right|_{t=0}\\
&=\langle \mathcal{S}_g(h), (f_1, f_2)\rangle  =\langle h, S^*_g(f_1, f_2)\rangle
=\int_M(\delta R_g h)f_1 dV_g +2\int_{\partial M}(\delta H_\gamma h)f_2 dA_\gamma.
\end{split}
\end{equation*}
If we define
\begin{equation}\label{1.2}
\mathcal{S}^*_g(f):=\mathcal{S}^*_g(f,f)=(A^*_g f, B_\gamma^* f)~~\mbox{ where }f\in C^\infty(M),
\end{equation}
then it follows from (2.3) in \cite{cruz2019prescribing} that
\begin{equation}\label{1.1}
\begin{split}
A^*_g f     &=-(\Delta_gf)g +\mbox{Hess}_g f-f Ric_g, \\
B_\gamma^* f&=\frac{\partial f}{\partial \nu}\gamma - f II_\gamma.
\end{split}
\end{equation}

Inspired by the notion of $Q$-singular space
defined in \cite{lin2016deformations} (see also \cite{Fischer&Marsden}),
we have the following:

\begin{definition}
The metric $g$ is called singular if $\mathcal{S}_g^*$ defined in \eqref{1.2} is not injective, namely, $\ker(\mathcal{S}^*_g)\neq \{ 0\}$. We also refer $(M,\partial M, g, f)$
as singular space, if $0\not\equiv f\in \ker(\mathcal{S}^*_g)$.
\end{definition}

It follows from (\ref{1.1}) that $f\in \ker(\mathcal{S}_g^*)$ if and only if $f$ satisfies the equations
\begin{equation}\label{system1}
\left\{
\begin{split}
-(\Delta_g f)g+\mbox{Hess}_g f-f Ric_g &=0 \text{ in } M, \\
\displaystyle\frac{\partial f}{\partial \nu}\gamma -f II_\gamma &=0 \text{ on } \partial M.
\end{split}
\right.
\end{equation}
Taking the trace of \eqref{system1} with respect to $g$, we obtain
\begin{equation}\label{system2}
\left\{
\begin{split}
\displaystyle\Delta_g f+\frac{R_g}{n-1}f &=0 \text{ in } M, \\
\displaystyle\frac{\partial f}{\partial \nu}-\frac{H_\gamma}{n-1}f &=0 \text{ on }\partial M.
\end{split}
\right.
\end{equation}
That is to say, if $f\in \ker(\mathcal{S}_g^*)$, then $f$ must satisfy (\ref{system2}).

\section{Singular and nonsingular spaces}\label{section_3}

In this section, we show that some geometric properties of $(M,\partial M,g)$
will imply that it is singular (or not singular).

\begin{prop}\label{prop1.1}
If $R_g\leq 0$ and $H_\gamma\leq 0$ such that  one of them is not identically equal to zero, then $g$ is not singular.
\end{prop}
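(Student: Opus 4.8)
The plan is to exploit the fact, noted right after \eqref{system2}, that any $f \in \ker(\mathcal{S}_g^*)$ automatically satisfies the traced system
\[
\Delta_g f + \frac{R_g}{n-1} f = 0 \ \text{ in } M, \qquad \frac{\partial f}{\partial \nu} - \frac{H_\gamma}{n-1} f = 0 \ \text{ on } \partial M,
\]
and to combine its two equations via a single integration by parts. Concretely, I would multiply the interior equation by $f$, integrate over $M$, and use Green's identity
\[
\int_M f\, \Delta_g f \, dV_g = -\int_M |\nabla f|^2 \, dV_g + \int_{\partial M} f\, \frac{\partial f}{\partial \nu}\, dA_\gamma,
\]
in which the boundary integrand is rewritten, by the second equation, as $f\,\partial f/\partial \nu = \frac{H_\gamma}{n-1} f^2$. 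This produces the identity
\[
\int_M |\nabla f|^2 \, dV_g = \frac{1}{n-1}\left( \int_M R_g\, f^2 \, dV_g + \int_{\partial M} H_\gamma\, f^2 \, dA_\gamma \right).
\]

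The proof is then finished by a sign argument. Since $n \ge 2$ and both $R_g \le 0$ and $H_\gamma \le 0$, the right-hand side is non-positive while the left-hand side is non-negative, so both sides must vanish; in particular each of $\int_M R_g f^2 \, dV_g$ and $\int_{\partial M} H_\gamma f^2 \, dA_\gamma$ vanishes, being a non-positive term in a vanishing sum of non-positive terms. From $\int_M |\nabla f|^2 \, dV_g = 0$ and connectedness of $M$ I conclude that $f$ is a constant $c$, whence $c^2 \int_M R_g \, dV_g = 0$ and $c^2 \int_{\partial M} H_\gamma \, dA_\gamma = 0$. If $R_g \not\equiv 0$, then $R_g \le 0$ gives $\int_M R_g \, dV_g < 0$, forcing $c = 0$; the same conclusion follows if instead $H_\gamma \not\equiv 0$. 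Thus $f \equiv 0$, i.e.\ $\ker(\mathcal{S}_g^*) = \{0\}$, so $g$ is not singular.

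I do not anticipate a real obstacle. The two points that deserve attention are keeping the outward normal $\nu$ consistently oriented so that the boundary term in Green's identity has exactly the sign matching \eqref{system2}, and invoking the hypothesis that at least one of $R_g$, $H_\gamma$ is not identically zero precisely at the step where it is needed, namely to rule out a nonzero constant in the kernel. (If that hypothesis were dropped the constants could persist --- this happens, for instance, when $g$ is Ricci-flat with totally geodesic boundary, since then every constant solves the full system \eqref{system1}.)
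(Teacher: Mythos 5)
Your proof is correct and follows the same approach as the paper: multiply the traced equation by $f$, integrate by parts, use the sign hypotheses to force $\nabla f \equiv 0$, then invoke the nonvanishing hypothesis to rule out nonzero constants. The only cosmetic difference is that after concluding $f \equiv c$ you argue via the vanishing of the integrals $\int_M R_g f^2$ and $\int_{\partial M} H_\gamma f^2$, whereas the paper substitutes $f \equiv c$ directly back into the pointwise system \eqref{system2}; both routes are equivalent.
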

\begin{proof}
If $f\in \ker(\mathcal{S}_g^*)$, then   \eqref{system2} holds. Multiplying $f$ to the first equation in \eqref{system2},
integrating it over $M$ and using integration by parts, we obtain
\begin{equation}\label{1.3}
\begin{split}
0&=\int_M \Big( f\Delta_g f+\frac{R_g}{n-1}f^2 \Big)dV_g \\
&=\int_M \Big( -|\nabla_g f|^2+\frac{R_g}{n-1} f^2\Big)dV_g + \int_{\partial M} f\frac{\partial f}{\partial \nu}dA_\gamma\\
&=\int_M \Big( -|\nabla_g f|^2 +\frac{R_g}{n-1} f^2\Big)dV_g+\int_{\partial M}\frac{H_\gamma}{n-1}f^2dA_\gamma
\end{split}
\end{equation}
where we have used the second equation of \eqref{system2} in the last equality.
Since $R_g\leq 0$ and $H_\gamma\leq 0$,
it follows from (\ref{1.3}) that
$|\nabla_g f|^2\equiv 0$ in $M$, which implies that  $f\equiv c$ for some constant $c$.
Hence, (\ref{system2}) reduces to
$$\frac{R_g}{n-1}c=0\mbox{ in }M~~\mbox{ and }~~-\frac{H_\gamma}{n-1}c=0\mbox{ on }\partial M.$$
Since $R_g$ or $H_\gamma$  is not identically equal to zero by assumption,
we can conclude that $c=0$, i.e.
$f\equiv 0$.
Therefore, we have shown that $\ker(\mathcal{S}_g^*)=\{0\}$, as required.
\end{proof}

\begin{prop}\label{supseteq}
If $(M,\partial M,g)$ is Ricci-flat with totally-geodesic boundary, then $g$ is singular.
\end{prop}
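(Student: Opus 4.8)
The plan is to exhibit an explicit nonzero element of $\ker(\mathcal{S}_g^*)$, namely a constant function. Recall from \eqref{1.1} that $f\in\ker(\mathcal{S}_g^*)$ precisely when the system \eqref{system1} holds, i.e.
\[
A^*_g f=-(\Delta_g f)g+\mbox{Hess}_g f-f\,Ric_g=0 \text{ in }M,
\qquad
B_\gamma^* f=\frac{\partial f}{\partial\nu}\gamma-f\,II_\gamma=0 \text{ on }\partial M.
\]
Under the hypotheses we have $Ric_g\equiv 0$ in $M$ and $II_\gamma\equiv 0$ on $\partial M$, so both the curvature term in $A^*_g$ and the second–fundamental–form term in $B_\gamma^*$ drop out, and the system collapses to $-(\Delta_g f)g+\mbox{Hess}_g f=0$ in $M$ and $\tfrac{\partial f}{\partial\nu}\,\gamma=0$ on $\partial M$.

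First I would take $f\equiv c$ for any constant $c\neq 0$. Then $\mbox{Hess}_g f=0$ and $\Delta_g f=0$, so $A^*_g f=0-0-c\cdot 0=0$; and $\tfrac{\partial f}{\partial\nu}=0$, so $B_\gamma^* f=0-c\cdot 0=0$. Hence $f\equiv c$ solves \eqref{system1}, which means $0\not\equiv c\in\ker(\mathcal{S}_g^*)$. By the definition of singular metric, $g$ is singular, and in fact $(M,\partial M,g,c)$ is a singular space for every nonzero constant $c$.

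There is essentially no obstacle here: the only thing to be careful about is that the characterization \eqref{1.1} is applied correctly and that one records both the interior and the boundary conditions. This is the manifold-with-boundary analogue of the classical fact, going back to Fischer–Marsden \cite{Fischer&Marsden}, that Ricci-flat closed manifolds are singular (the constants always lie in the kernel of the formal adjoint of the linearized scalar curvature); the totally-geodesic boundary hypothesis is exactly what is needed to keep the constants in the kernel of $B_\gamma^*$ as well. One could remark afterwards that, conversely, the constants span at least a one-dimensional subspace of $\ker(\mathcal{S}_g^*)$ in this situation, so these spaces are automatically degenerate.
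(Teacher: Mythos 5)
Your proof is correct and follows essentially the same route as the paper: the paper also takes a nonzero constant function, observes that Ricci-flatness and totally geodesic boundary make both equations in \eqref{system1} hold trivially, and concludes $\ker(\mathcal{S}_g^*)\neq\{0\}$. You have merely written out the verification of $A^*_gf=0$ and $B^*_\gamma f=0$ in more detail.
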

\begin{proof}
By assumption, we have  $Rig_g\equiv 0$ in $M$ and $II_\gamma\equiv 0$ on $\partial M$. If we take $f$ to be any nonzero constant function defined in $M$, then it satisfies \eqref{system1}. Thus, $\ker(\mathcal{S}_g^*)\neq \{ 0\}$ and the result follows.
\end{proof}

In fact, we have the following:

\begin{prop}\label{Ricci-flat}
If $(M,\partial M,g)$ is Ricci-flat with totally-geodesic boundary, then
\begin{equation}\label{condition}
\ker(\mathcal{S}_g^*)=\{\mbox{constant}\}.
\end{equation}
\end{prop}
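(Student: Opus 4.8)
We already know from Proposition \ref{supseteq} that constants lie in $\ker(\mathcal{S}_g^*)$, so the content is the reverse inclusion: every $f\in\ker(\mathcal{S}_g^*)$ is constant. The plan is to exploit the full tensorial system \eqref{system1}, not merely its trace \eqref{system2}. Since $(M,\partial M,g)$ is Ricci-flat with totally geodesic boundary, $Ric_g\equiv 0$ and $II_\gamma\equiv 0$, so \eqref{system1} collapses to
\begin{equation*}
\mbox{Hess}_g f=(\Delta_g f)\,g\ \text{ in }M,\qquad \frac{\partial f}{\partial\nu}=0\ \text{ on }\partial M,
\end{equation*}
and \eqref{system2} becomes $\Delta_g f=0$ in $M$ with $\partial f/\partial\nu=0$ on $\partial M$. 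Hence $f$ is in fact harmonic, and the first equation forces $\mbox{Hess}_g f\equiv 0$; i.e. $f$ is a parallel function.

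The cleanest route from here is the Bochner-type identity: first I would run the integration-by-parts argument exactly as in the proof of Proposition \ref{prop1.1}, which with $R_g=0$ and $H_\gamma=0$ gives $\int_M|\nabla_g f|^2\,dV_g=0$ directly, hence $\nabla_g f\equiv 0$ and $f$ is constant. This is essentially immediate and is the argument I expect the authors use. An alternative, which avoids even invoking \eqref{system2}, is to integrate the squared norm of $\mbox{Hess}_g f$: from $\mbox{Hess}_g f=(\Delta_g f)g$ one gets $|\mbox{Hess}_g f|^2=(\Delta_g f)^2/n$, and combined with the Bochner formula $\tfrac12\Delta_g|\nabla_g f|^2=|\mbox{Hess}_g f|^2+\langle\nabla_g f,\nabla_g\Delta_g f\rangle+Ric_g(\nabla_g f,\nabla_g f)$ together with $\Delta_g f=0$ (from the trace) and $Ric_g=0$, one concludes $|\mbox{Hess}_g f|^2\equiv0$; the Neumann condition $\partial f/\partial\nu=0$ kills the boundary term that appears upon integrating $\Delta_g|\nabla_g f|^2$ over $M$. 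Either way $\nabla_g f\equiv 0$, so $f$ is locally constant, and since one does not assume $M$ connected one should either add that hypothesis or conclude $f$ is constant on each component.

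The only genuine subtlety — and the step I would be most careful about — is the boundary term. In the first approach it is handled automatically because $H_\gamma=0$ makes the boundary integral in \eqref{1.3} vanish; in the Bochner approach one must check that $\int_{\partial M}\tfrac{\partial}{\partial\nu}|\nabla_g f|^2\,dA_\gamma$ vanishes, which follows from $\partial f/\partial\nu=0$ on $\partial M$ together with $\mbox{Hess}_g f\equiv 0$ (or, more elementarily, from the Neumann condition alone after noting that the normal derivative of $|\nabla f|^2$ equals $2\,\mbox{Hess}_g f(\nabla f,\nu)$, and $II_\gamma\equiv 0$ lets one relate tangential and normal derivatives cleanly). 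I would present the first, shorter argument: it reuses the computation already displayed in \eqref{1.3}, requires nothing beyond what Proposition \ref{prop1.1} used, and makes the role of "totally geodesic" transparent through the vanishing of $H_\gamma$.
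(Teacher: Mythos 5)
Your primary argument is exactly the paper's: reduce \eqref{system1}/\eqref{system2} under $Ric_g\equiv 0$, $II_\gamma\equiv 0$ to the homogeneous Neumann problem $\Delta_g f=0$ in $M$, $\partial f/\partial\nu=0$ on $\partial M$, and conclude $f$ is constant; the paper simply states this conclusion while you spell out the standard integration-by-parts step (as in \eqref{1.3} with $R_g=0$, $H_\gamma=0$). The alternative Bochner route and the remark about connectedness are correct but unnecessary extras.
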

\begin{proof}
We have already shown that $\{\mbox{constant}\}\subseteq\ker(\mathcal{S}_g^*)$
 in Proposition \ref{supseteq}. On the other hand,  if $f\in \ker(\mathcal{S}_g^*)$, then $f$ satisfies  \eqref{system2}.
 This together with the assumption $Rig_g\equiv 0$ in $M$ and $II_\gamma\equiv 0$ on $\partial M$
implies that
\begin{align}\label{Neumann1}
\left\{
\begin{array}{rl}
\Delta_g f=0 &\text{ in }M, \\
\displaystyle\frac{\partial f}{\partial \nu}=0 &\text{ on }\partial M,
\end{array}
\right.
\end{align}
which shows that $f$ is constant.
\end{proof}

The condition (\ref{condition})
gives a characterization of
the Ricci-flat manifold with totally-geodesic boundary.

\begin{prop}\label{prop1.4}
If $f$ is a nonzero constant function lies in $\ker(\mathcal{S}_g^*)$, then $(M,\partial M,g)$ is Ricci-flat with totally-geodesic boundary.
\end{prop}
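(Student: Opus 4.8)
The plan is to plug a nonzero constant $f \equiv c$ into the full system \eqref{system1} rather than the traced system \eqref{system2}, since the trace loses information. With $f$ constant we have $\mbox{Hess}_g f = 0$ and $\Delta_g f = 0$, so the interior equation of \eqref{system1} collapses to $-f\, Ric_g = 0$ in $M$; since $f = c \neq 0$ this forces $Ric_g \equiv 0$ in $M$. Likewise the boundary equation of \eqref{system1} becomes $\frac{\partial f}{\partial \nu}\gamma - f\, II_\gamma = -c\, II_\gamma = 0$ on $\partial M$ (the normal derivative of a constant vanishes), and again $c \neq 0$ gives $II_\gamma \equiv 0$ on $\partial M$, i.e.\ the boundary is totally geodesic.

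So the whole argument is a one-line substitution once one recognizes that one should use \eqref{system1} and not its trace \eqref{system2}; this is essentially the exact converse of Proposition~\ref{supseteq}, and together with Proposition~\ref{Ricci-flat} it shows that ``$\ker(\mathcal{S}_g^*)$ contains a nonzero constant'' is equivalent to ``$\ker(\mathcal{S}_g^*) = \{\mbox{constant}\}$'' is equivalent to ``$(M,\partial M,g)$ is Ricci-flat with totally geodesic boundary.'' I would phrase the proof to make that equivalence explicit, since it is the conceptual content of the statement.

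The only thing to be careful about — and the ``obstacle,'' such as it is — is making sure the reader sees why \eqref{system2} alone is insufficient: from the traced system a nonzero constant $f$ only yields $R_g \equiv 0$ in $M$ and $H_\gamma \equiv 0$ on $\partial M$, i.e.\ scalar-flat with minimal boundary, which is strictly weaker than Ricci-flat with totally geodesic boundary. The strength of the conclusion comes entirely from retaining the untraced tensor equations, and I would note this explicitly so the hypothesis ``$f \in \ker(\mathcal{S}_g^*)$'' (as opposed to merely ``$f$ solves \eqref{system2}'') is seen to be doing real work. Beyond that, no estimates, no integration by parts, and no PDE theory are needed; the proof is purely algebraic.
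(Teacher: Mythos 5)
Your proof is correct and is essentially identical to the paper's: both substitute the nonzero constant $f\equiv c$ into the untraced system \eqref{system1}, observe that the Hessian and normal-derivative terms vanish, and conclude $Ric_g\equiv 0$ and $II_\gamma\equiv 0$ from $c\neq 0$. The extra commentary you add about why \eqref{system2} alone would be insufficient is accurate and a nice expository touch, but the argument itself matches the paper's.
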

\begin{proof}
By assumption, the function $f\equiv c$ satisfies \eqref{system1}. Thus, we have
\begin{align*}
\left\{
\begin{array}{rl}
c Ric_g =0 &\text{ in }M, \\
-c II_\gamma =0 &\text{ on }\partial M.
\end{array}
\right.
\end{align*}
Since $c$ is nonzero, we have $Rig_g\equiv 0$ in $M$ and $II_\gamma\equiv 0$ on $\partial M$,
as required.
\end{proof}

\begin{prop}\label{existnonsing}
Suppose $R_g\equiv 0$ in $M$ and $H_\gamma\equiv 0$ on $\partial M$. If one of the following assumptions holds:
\begin{enumerate}
\item $Ric_g\not\equiv 0$ in $M$, i.e. $g$ is not Ricci-flat;
\item $II_\gamma\not\equiv 0$ on $\partial M$, i.e. $\partial M$ is not totally-geodesic,
\end{enumerate}
then $g$ is not singular.
\end{prop}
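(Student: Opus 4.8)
The plan is to reduce to the trace equations \eqref{system2}, deduce from them that every element of $\ker(\mathcal{S}_g^*)$ is constant, and then appeal to Proposition \ref{prop1.4} (or equivalently substitute back into the full system \eqref{system1}) to rule out nonzero constants. In other words, this proposition should come out as a short combination of the trace trick used repeatedly in this section with Proposition \ref{prop1.4}.

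First I would take $f\in\ker(\mathcal{S}_g^*)$, so that $f$ satisfies the tensorial system \eqref{system1} and in particular its trace \eqref{system2}. Substituting the hypotheses $R_g\equiv 0$ in $M$ and $H_\gamma\equiv 0$ on $\partial M$ into \eqref{system2} turns it into the homogeneous Neumann problem $\Delta_g f=0$ in $M$, $\partial f/\partial\nu=0$ on $\partial M$. Running the integration-by-parts computation of \eqref{1.3} — now with vanishing curvature terms and vanishing boundary term — yields $\int_M|\nabla_g f|^2\,dV_g=0$, hence $f\equiv c$ for some constant $c$. This is exactly the step carried out in the proof of Proposition \ref{Ricci-flat}.

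Next I would feed $f\equiv c$ back into the full system \eqref{system1}. Since $\mbox{Hess}_g f=0$ and $\Delta_g f=0$ for a constant $f$, the two equations of \eqref{system1} collapse to $c\,Ric_g=0$ in $M$ and $c\,II_\gamma=0$ on $\partial M$; alternatively, if $c\neq 0$ one may invoke Proposition \ref{prop1.4} directly. Either way, $c\neq 0$ would force $Ric_g\equiv 0$ in $M$ and $II_\gamma\equiv 0$ on $\partial M$, contradicting both assumption (1) and assumption (2). Therefore $c=0$, i.e. $f\equiv 0$, so $\ker(\mathcal{S}_g^*)=\{0\}$ and $g$ is not singular.

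As for the main difficulty: there is essentially no analytic obstacle here, so this is really a matter of assembling the right ingredients. The one point that needs care is to use that membership in $\ker(\mathcal{S}_g^*)$ supplies the full tensorial system \eqref{system1}, not merely its scalar trace \eqref{system2} — it is precisely the tensorial equations that detect $Ric_g$ and $II_\gamma$ once $f$ has been shown to be constant, and without them the hypotheses (1) and (2) could not be brought into play.
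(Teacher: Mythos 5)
Your proof matches the paper's argument exactly: reduce to the trace system \eqref{system2}, use the vanishing of $R_g$ and $H_\gamma$ to get the homogeneous Neumann problem \eqref{Neumann1} and hence $f\equiv c$, then invoke Proposition \ref{prop1.4} (or, equivalently, substitute the constant back into \eqref{system1}) to force $c=0$ under either hypothesis. The only cosmetic difference is that you spell out the integration-by-parts step, whereas the paper simply cites \eqref{Neumann1}.
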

\begin{proof}
Let $f\in \ker(\mathcal{S}_g^*)$.
Since $R_g\equiv 0$ in $M$ and $H_\gamma\equiv 0$ on $\partial M$ by assumption, again by \eqref{system2} we have \eqref{Neumann1},
and thus $f\equiv c$ for some constant $c$.
If $c$ is nonzero, i.e. $f$ is a nonzero constant function lies in $\ker(\mathcal{S}_g^*)$,
it follows from Proposition \ref{prop1.4}
that $(M,\partial M, g)$ is Ricci-flat with totally-geodesic boundary,
which contradicts to the assumption.
Therefore, we must have $c=0$, i.e. $f\equiv 0$.
\end{proof}

We remark that a result similar to Proposition \ref{existnonsing} has been obtained in \cite{cruz2019prescribing}. See Proposition 3.3 in \cite{cruz2019prescribing}.

\begin{prop}
Suppose that
\begin{equation}\label{2.6}
Ric_g=\frac{R_g}{n}g=(n-1)g\mbox{ in }M~~\mbox{ and }~~H_{\gamma}=n-1\mbox{ on }\partial M.
\end{equation}
If $g$ is singular, then
$(M,\partial M, g)$ is isometric to
either  a spherical cap or  the standard hemisphere.
\end{prop}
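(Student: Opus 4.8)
The plan is to use singularity to produce a nonzero $f\in\ker(\mathcal{S}^*_g)$, to show that the normalizations in \eqref{2.6} collapse \eqref{system1} into Obata's equation, and then to deduce the geometry from an Obata-type rigidity for manifolds with boundary.

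First, since $g$ is singular I pick $f\in\ker(\mathcal{S}^*_g)$ with $f\not\equiv 0$; then $f$ solves \eqref{system1}, hence also the traced system \eqref{system2}. By \eqref{2.6} we have $R_g=n(n-1)$, so the first line of \eqref{system2} gives $\Delta_g f=-nf$; substituting this together with $Ric_g=(n-1)g$ into the first line of \eqref{system1} yields the Obata equation
\[
\mbox{Hess}_g f=(\Delta_g f+(n-1)f)\,g=-f\,g\qquad\text{in }M .
\]
No nonzero constant solves this (it would force $g\equiv 0$), so $f$ is non-constant. On the boundary, tracing the second line of \eqref{system1} gives $(n-1)\partial_\nu f-fH_\gamma=0$, so by \eqref{2.6} again $\partial_\nu f=f$ on $\partial M$; feeding this back into $\partial_\nu f\,\gamma-fII_\gamma=0$ gives $f\,(II_\gamma-\gamma)=0$ on $\partial M$.

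Next I would pin down the boundary geometry. From $\mbox{Hess}_g f=-f g$ one has $\nabla_g(|\nabla_g f|^2+f^2)=0$, so $|\nabla_g f|^2+f^2\equiv c$ for a constant $c$, with $c>0$ (otherwise $f\equiv 0$). Hence at a boundary point with $f=0$ one has $|\nabla_g f|^2=c>0$, and moreover $f$ cannot vanish identically on $\partial M$ — else $\partial_\nu f=f=0$ there and unique continuation for $\Delta_g f+nf=0$ would force $f\equiv 0$ in $M$. Therefore $\{f\neq 0\}$ is open and dense in $\partial M$, and letting $f\to 0$ in $f\,(II_\gamma-\gamma)=0$ forces $II_\gamma=\gamma$ on all of $\partial M$: the boundary is totally umbilic with every principal curvature equal to $1$.

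The remaining, and hardest, step is the rigidity: a connected compact Einstein manifold $(M^n,g)$ with $Ric_g=(n-1)g$, totally umbilic boundary of unit principal curvatures, and a non-constant $f$ solving $\mbox{Hess}_g f=-fg$ with $\partial_\nu f=f$, must be a spherical cap or the standard hemisphere. I would argue via the structure of Obata solutions: on the open set where $\nabla_g f\neq 0$ the integral curves of $\nabla_g f/|\nabla_g f|$ are unit-speed geodesics and the level sets of $f$ are totally umbilic, so in the associated Fermi-type coordinates $g=ds^2+\varphi(s)^2 h$ with $\varphi''=-\varphi$, $f''=-f$, and $h$ Einstein; at a critical point of $f$ the metric is compelled to close up as the round $ds^2+\sin^2 s\,g_{S^{n-1}}$ about a pole. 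Combining this warped-product/pole structure with the conservation law $|\nabla_g f|^2+f^2\equiv c$, the relation $\partial_\nu f=f$, and the umbilicity $II_\gamma=\gamma$ (which fixes the geodesic radius of $\partial M$), one identifies $(M,\partial M,g)$ isometrically with a geodesic ball in the round $S^n$ — a spherical cap — or, in the borderline configuration, the standard hemisphere, with $f$ realized as a first spherical harmonic of $S^n$ restricted to $M$; alternatively one may quote directly a boundary version of Obata's theorem in the spirit of Reilly. The delicate points are converting the local warped-product data into a genuine global isometry, separating the spherical-cap case from the hemisphere case, and excluding two-ended ``band''-type warped products, which is precisely where the exact value $H_\gamma=n-1$ is used.
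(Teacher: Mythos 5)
Your reduction to the Obata system $\mathrm{Hess}_g f+fg=0$ in $M$ and $\partial_\nu f=f$ on $\partial M$ is exactly the paper's argument, and from there the paper simply cites Theorem 3 of Chen--Lai--Wang \cite{chen2019obata} (``The Obata equation with Robin boundary condition'') to get the spherical cap/hemisphere conclusion, rather than re-deriving the warped-product rigidity you sketch. Your sketch of that rigidity (the conserved quantity $\lvert\nabla_g f\rvert^2+f^2$, umbilicity $II_\gamma=\gamma$, Fermi coordinates around a pole) points in the right direction but is acknowledged as heuristic at the hardest step --- which is precisely the content of the theorem the paper quotes; once you invoke that reference, the proof matches.
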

\begin{proof}
Since $g$ is singular by assumption, there exists $0\not\equiv f\in \ker(\mathcal{S}_g^*)$.
Note that $f$ is not a non-constant function; otherwise,
it follows from Proposition \ref{prop1.4} that
$g$ is Ricci-flat, which contradicts to the assumption that
the scalar curvature $R_g$ is nonzero.
Since \eqref{system1} and
\eqref{system2}  hold, we can substitute \eqref{system2} into \eqref{system1} and apply (\ref{2.6}) to get
\begin{align*}
\left\{
\begin{array}{rl}
\displaystyle\mbox{Hess}_g f+f g =0& \text{ in }M, \\
\displaystyle\frac{\partial f}{\partial \nu}=f &\text{ on }\partial M.
\end{array}
\right.
\end{align*}
Now the result follows immediately from Theorem 3 in \cite{chen2019obata}.
\end{proof}

\begin{prop}\label{contains1}
Let $(M, \partial M, g)$ be an $n$-dimensional Einstein manifold
with minimal boundary, where $n\geq 3$. If $g$ is singular, then $\displaystyle\frac{R_g}{n-1}$ is an eigenvalue
of the Laplacian with Neumann boundary condition.
In this case, $\ker(\mathcal{S}_g^*)$ lies in the eigenspace of $\displaystyle\frac{R_g}{n-1}$.
In particular, $\ker(\mathcal{S}_g^*)$ is finite-dimensional.
\end{prop}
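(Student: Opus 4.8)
The plan is to read everything off the traced system \eqref{system2}. Before looking at the kernel I would record two elementary consequences of the hypotheses. First, since $(M,g)$ is Einstein with $n\geq 3$, the contracted second Bianchi identity forces the scalar curvature $R_g$ to be constant on $M$; this is exactly where the dimension restriction enters, the Einstein condition being vacuous in dimension $2$. Second, ``minimal boundary'' means precisely that $H_\gamma\equiv 0$ on $\partial M$.

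Next I would take an arbitrary $f\in\ker(\mathcal{S}_g^*)$. By the discussion following \eqref{system1}, such an $f$ satisfies \eqref{system2}, and feeding in $H_\gamma\equiv 0$ this reduces to
\[
\Delta_g f+\frac{R_g}{n-1}\,f=0 \ \text{ in } M, \qquad \frac{\partial f}{\partial\nu}=0 \ \text{ on } \partial M .
\]
Since $R_g$ is a constant, this is exactly the eigenvalue equation for $-\Delta_g$ with Neumann boundary condition and eigenvalue $\tfrac{R_g}{n-1}$. Hence $\ker(\mathcal{S}_g^*)$ is contained in the (a priori possibly trivial) Neumann eigenspace associated with $\tfrac{R_g}{n-1}$. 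The singularity hypothesis then gives a nonzero such $f$, so $\tfrac{R_g}{n-1}$ is a genuine eigenvalue of the Neumann Laplacian. Finally, by standard elliptic spectral theory the Neumann Laplacian on a compact manifold with boundary has discrete spectrum with finite-dimensional eigenspaces, so $\ker(\mathcal{S}_g^*)$, sitting inside one of them, is finite-dimensional.

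I do not expect a serious obstacle: the whole point is that under the Einstein-plus-minimal-boundary hypotheses the interior–boundary system \eqref{system2} degenerates to a constant-coefficient Neumann eigenvalue problem. The one step that must not be glossed over is the constancy of $R_g$, since without it \eqref{system2} is merely a Schr\"odinger-type problem and the phrase ``eigenvalue of the Laplacian'' would be meaningless. As a consistency check one may also note that combining this proposition with Proposition \ref{prop1.1} (applied with $H_\gamma\equiv 0\leq 0$) shows a singular Einstein metric with minimal boundary must have $R_g\geq 0$, matching the nonnegativity of the eigenvalue $\tfrac{R_g}{n-1}$; the borderline case $R_g=0$ is then the Ricci-flat, totally geodesic situation of Proposition \ref{Ricci-flat}, in which the relevant eigenspace is just the constants.
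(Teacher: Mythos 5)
Your proposal is correct and follows essentially the same route as the paper: reduce the traced system \eqref{system2} under the Einstein ($R_g$ constant) and minimal-boundary ($H_\gamma\equiv 0$) hypotheses to the Neumann eigenvalue problem \eqref{nuemann3}, then invoke standard spectral theory for finite-dimensionality. Your added remarks on the contracted Bianchi identity and the consistency check against Propositions \ref{prop1.1} and \ref{Ricci-flat} are sound but go beyond what the paper records.
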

\begin{proof}
Let $0\not\equiv f \in \ker(\mathcal{S}_g^*)$.
Since $(M, \partial M, g)$ is  an $n$-dimensional Einstein manifold with $n\geq 3$,
the scalar curvature $R_g$ is constant. Moreover,  $H_\gamma=0$ on $\partial M$
by assumption.  Hence, \eqref{system2} implies that $f$ satisfies
\begin{align}\label{nuemann3}
\left\{
\begin{array}{rl}
\displaystyle\Delta_g f+\frac{R_g}{n-1}f =0& \text{ in } M, \\
\displaystyle\frac{\partial f}{\partial \nu} =0& \text{ on }\partial M.
\end{array}
\right.
\end{align}
This implies that $\displaystyle\frac{R_g}{n-1}$ is the eigenvalue
of the Laplacian with Neumann boundary condition, and $f$ is the corresponding eigenfunction.
This shows that $\ker(\mathcal{S}_g^*)$ lies in the eigenspace of $\displaystyle\frac{R_g}{n-1}$, as claimed.
\end{proof}

\begin{prop}\label{Steklov}
Suppose that $(M,\partial M,g)$ is scalar-flat with umbilical boundary of constant mean curvature. If $g$ is singular, then $\displaystyle\frac{H_\gamma}{n-1}$ is an Steklov eigenvalue. In this case, $\ker(\mathcal{S}_g^*)$ lies
in the eigenspace corresponding to the Steklov eigenvalue $\displaystyle\frac{H_\gamma}{n-1}$.
\end{prop}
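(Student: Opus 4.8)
The plan is to follow the strategy of Proposition~\ref{contains1}, replacing the Neumann eigenvalue problem by the Steklov one. Let $0 \not\equiv f \in \ker(\mathcal{S}_g^*)$; then $f$ satisfies \eqref{system1}, and taking traces it also satisfies \eqref{system2}. Since $(M,\partial M,g)$ is scalar-flat we have $R_g \equiv 0$ in $M$, so the first equation of \eqref{system2} reduces to $\Delta_g f = 0$ in $M$, i.e. $f$ is harmonic. Since $\partial M$ has constant mean curvature, $H_\gamma$ is a constant, so the second equation of \eqref{system2} becomes $\frac{\partial f}{\partial \nu} = \frac{H_\gamma}{n-1} f$ on $\partial M$. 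Hence $f$ solves
\begin{align*}
\left\{
\begin{array}{rl}
\Delta_g f = 0 & \text{ in } M, \\
\displaystyle \frac{\partial f}{\partial \nu} = \frac{H_\gamma}{n-1} f & \text{ on } \partial M,
\end{array}
\right.
\end{align*}
which is exactly the Steklov eigenvalue problem with parameter $\frac{H_\gamma}{n-1}$.

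Next I would verify that $f$ is a genuine Steklov eigenfunction, namely that $f|_{\partial M} \not\equiv 0$. If $f$ vanished identically on $\partial M$, then $f$ would be harmonic in $M$ with zero Dirichlet data, and the maximum principle would force $f \equiv 0$, contradicting $f \not\equiv 0$. Therefore $f|_{\partial M} \not\equiv 0$, so $\frac{H_\gamma}{n-1}$ is a Steklov eigenvalue and $f$ belongs to the corresponding eigenspace. Since $f \in \ker(\mathcal{S}_g^*)$ was arbitrary, $\ker(\mathcal{S}_g^*)$ is contained in the Steklov eigenspace of $\frac{H_\gamma}{n-1}$, as claimed.

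I do not expect a serious obstacle: the statement is essentially a restatement of the traced system \eqref{system2}. The only point needing care is the nonvanishing of $f$ on $\partial M$, handled above by the maximum principle. It is instructive to see how the hypotheses are used: scalar-flatness turns the interior equation into harmonicity; constant mean curvature makes $\frac{H_\gamma}{n-1}$ an honest (constant) eigenvalue parameter; and umbilicity, $II_\gamma = \frac{H_\gamma}{n-1}\gamma$, ensures that the boundary equation of \eqref{system1} is equivalent to its trace in \eqref{system2}, so nothing is lost in passing to \eqref{system2}. Since the Steklov spectrum is discrete, one moreover recovers (as in Proposition~\ref{contains1}) that $\ker(\mathcal{S}_g^*)$ is finite-dimensional, though this is not part of the assertion.
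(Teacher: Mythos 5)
Your proof is correct and follows essentially the same route as the paper: reduce \eqref{system2} to the Steklov eigenvalue problem using $R_g\equiv 0$ and the constancy of $H_\gamma$. The one genuine addition is your maximum-principle check that $f|_{\partial M}\not\equiv 0$ (so $f$ really is a Steklov eigenfunction), a detail the paper's proof leaves implicit; note also that the paper's displayed boundary condition $\frac{\partial f}{\partial \nu}+\frac{H_\gamma}{n-1}f=0$ has a sign typo, and your version $\frac{\partial f}{\partial \nu}=\frac{H_\gamma}{n-1}f$ is the one that actually follows from \eqref{system2}.
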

\begin{proof}
Let $0\not\equiv f\in\ker(\mathcal{S}_g^*)$.
By assumption, we have
\begin{equation}\label{2.3}
R_g=0\mbox{ in }M~~\mbox{ and
}~~II_\gamma=\frac{H_{\gamma}}{n-1}\gamma\mbox{ on }\partial M,
\end{equation}
where $H_\gamma$ is constant.
It follows from (\ref{2.3}) that \eqref{system2} reduces to
\begin{align*}
\left\{
\begin{array}{rl}
\Delta_g f= 0 &\text{ in } M, \\
\displaystyle\frac{\partial f}{\partial \nu}+\frac{H_\gamma}{n-1}f=0 &\text{ on }\partial M.
\end{array}
\right.
\end{align*}
Hence,
$\displaystyle\frac{H_\gamma}{n-1}$ is the Steklov
eigenvalue, and $f$ is the corresponding eigenfunction.
In particular,
this shows that $\ker(\mathcal{S}_g^*)$ lies in the eigenspace corresponding to the Steklov eigenvalue $\displaystyle\frac{H_\gamma}{n-1}$.
This proves the assertion.
\end{proof}

We remark that a result similar to Proposition \ref{Steklov} has been obtained in \cite{cruz2019prescribing}. See Proposition 3.1 in \cite{cruz2019prescribing}.

\section{Examples}\label{section_example}

In this section, we give some examples of singular and non-singular space.\\

\textbf{Manifolds with negative Yamabe constant.}
Suppose that $(M,\partial M,g)$ is an $n$-dimensional compact manifold with boundary,
where $n\geq 3$. The Yamabe constant of $(M,\partial M,g)$ is defined as (c.f. \cite{Escobar2})
\begin{equation*}
Y(M,\partial M,g)=\inf\{E_g(u)\,|\,0<u\in C^\infty(M)\},
\end{equation*}
where
$$E(u)=\frac{\int_M\Big(\frac{4(n-1)}{n-2}|\nabla_gu|^2+R_gu^2\Big)dV_g+2\int_{\partial M}H_gu^2dA_g
}{\left(\int_{M}u^{\frac{2n}{n-2}}dV_g\right)^{\frac{n-2}{n}}}.$$
If the Yamabe constant of $(M,\partial M,g)$ is negative,
then we can find a metric $\tilde{g}$ conformal to $g$ such that
$R_{\tilde{g}}<0$ in $M$ and $H_{\tilde{g}}=0$ on $\partial M$ (c.f. Lemma 1.1. in \cite{Escobar2}).
In particular, it follows from Proposition \ref{prop1.1} that $(M,\partial M,\tilde{g})$
is not singular.

Similarly, we can define (c.f. \cite{Escobar1})
\begin{equation*}
Q(M,\partial M,g)=\inf\{Q_g(u)\,|\,0<u\in C^\infty(M)\},
\end{equation*}
where
$$Q_g(u)=\frac{\int_M\Big(\frac{4(n-1)}{n-2}|\nabla_gu|^2+R_gu^2\Big)dV_g+2\int_{\partial M}H_gu^2dA_g
}{\left(\int_{\partial M}u^{\frac{2(n-1)}{n-2}}dA_g\right)^{\frac{n-2}{n-1}}}.$$
If $Q(M,\partial M,g)<0$,
then we can find a metric $\tilde{g}$ conformal to $g$ such that
$R_{\tilde{g}}=0$ in $M$ and $H_{\tilde{g}}<0$ on $\partial M$ (c.f. Proposition 1.4 in \cite{Escobar1}).
In particular, it follows from Proposition \ref{prop1.1} that $(M,\partial M,\tilde{g})$
is not singular.\\

\textbf{Ricci-flat manifolds with totally geodesic boundary.}
Suppose that $(M,g)$ is a closed (i.e. compact without boundary) manifold
which is Ricci-flat.
Consider the product manifold $\tilde{M}=[0,1]\times M$ equipped with the product metric
$\tilde{g}=dt^2+g$. Then $\tilde{g}$ is still Ricci-flat,
and its boundary $\partial\tilde{M}=(\{0\}\times M)\cup(\{1\}\times M)$
is totally geodesic.
Therefore, it follows from Proposition \ref{supseteq}
that $(\tilde{M},\partial\tilde{M},\tilde{g})$
is singular.
For example, we can take $(M,g)$ to be any compact Calabi-Yau manifold.
It is Ricci-flat. Then $\tilde{M}=[0,1]\times M$ equipped with the product metric
$\tilde{g}=dt^2+g$ is singular.

Now suppose that $(M_0,g_0)$ is a closed manifold such that $g_0$ is flat.
Then the product manifold $\tilde{M}=[0,1]\times M$ equipped with the product metric
$\tilde{g}=dt^2+g$ is still flat and has totally geodesic boundary.
Therefore, it follows from Proposition \ref{supseteq}
that $(\tilde{M},\partial\tilde{M},\tilde{g})$
is singular.
For example, if we take $(M_0,g_0)$ to be the $n$-dimensional torus $T^n$ equipped with
the flat metric $g_0$, then $[0,1]\times T^n$ equipped with the product metric
$dt^2+g_0$ is flat and has geodesic boundary, and hence is singular.\\

\textbf{Product manifolds.}
Suppose that $(M,g)$ is a closed Riemannian manifold
which is scalar-flat but not Ricci-flat. Consider the
 product manifold $\tilde{M}=[0,1]\times M$ equipped with the product metric
$\tilde{g}=dt^2+g$. Then $(\tilde{M},\partial \tilde{M}, \tilde{g})$
is still scalar-flat but not Ricci-flat. Its boundary
$\partial\tilde{M}=(\{0\}\times M)\cup(\{1\}\times M)$
is totally geodesic, and thus, its mean curvature is zero.
It follows from Proposition \ref{existnonsing}
that $(\tilde{M},\partial \tilde{M}, \tilde{g})$ is not singular.

For example, let $S^2$ be
the $2$-dimensional unit sphere equipped with the standard metric $g_1$,
and $\Sigma$ be a $2$-dimensional compact manifold
with genus at least 2 equipped with the hyperbolic metric $g_{-1}$.
Then the product manifold $M=S^2\times \Sigma$ is a closed manifold,
and the product metric $g=g_1+g_{-1}$ has zero scalar curvature
and is not Ricci-flat.
From the above discussion, we can conclude that
$\tilde{M}=[0,1]\times S^2\times \Sigma$
equipped with the metric $dt^2+g_1+g_{-1}$ is not singular.\\

\textbf{The upper hemisphere.} Let
$$\mathbb{S}_+^n=\big\{(x_1, \cdots, x_{n+1})\in \mathbb{R}^{n+1}\big| x_1^2 +\cdots x_{n+1}^2=1, x_{n+1}\geq 0\big\}$$
be the $n$-dimensional upper hemisphere. We have the following:

\begin{prop}\label{hemisphere}
Let $(\mathbb{S}_+^n, \partial \mathbb{S}_+^n)$ be the $n$-dimensional upper hemisphere equipped with the standard metric  $g_c$,
i.e. the sectional curvature of $g_c$ is $1$, where $n\geq 3$. Then $g_c$
is  singular. Moreover,
$$\ker(\mathcal{S}_{g_c}^*)=\mbox{\emph{span}}\{x_1, \cdots, x_n \},$$
where $(x_1, \cdots, x_n, x_{n+1})$ are the coordinates of $\mathbb{S}_+^n\subset \mathbb{R}^{n+1}$.
\end{prop}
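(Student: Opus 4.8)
The plan is to verify directly that each coordinate function $x_i$ (for $1 \le i \le n$) lies in $\ker(\mathcal{S}_{g_c}^*)$, and then to use Proposition \ref{contains1} to rule out anything else. First I would recall the standard fact that on the round sphere $(\mathbb{S}^n, g_c) \subset \mathbb{R}^{n+1}$, each ambient linear coordinate restricted to the sphere satisfies $\mathrm{Hess}_{g_c}\, x_i = -x_i\, g_c$; equivalently $x_i$ is a first spherical harmonic with $\Delta_{g_c} x_i = -n x_i$. Since $\mathbb{S}_+^n$ is Einstein with $Ric_{g_c} = (n-1)g_c$ (so $R_{g_c}/(n-1) = n$) and its boundary $\partial \mathbb{S}_+^n = \mathbb{S}^{n-1} \times \{0\}$ is totally geodesic (it is the fixed-point set of the reflection $x_{n+1} \mapsto -x_{n+1}$, hence minimal, and being umbilic with zero mean curvature it is totally geodesic), the second fundamental form $II_\gamma$ vanishes. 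Plugging $f = x_i$ into the first line of \eqref{system1}, $-(\Delta_{g_c} x_i) g_c + \mathrm{Hess}_{g_c} x_i - x_i Ric_{g_c} = n x_i g_c - x_i g_c - (n-1) x_i g_c = 0$; and the boundary line reads $\frac{\partial x_i}{\partial \nu}\gamma - x_i II_\gamma = \frac{\partial x_i}{\partial \nu}\gamma$, which vanishes because for $1 \le i \le n$ the function $x_i$ restricted to the equatorial hemisphere has normal derivative zero along the equator (the outward normal $\nu$ on $\partial \mathbb{S}_+^n$ points in a direction tangent to the hyperplane $\{x_{n+1}=0\}$, and along the equator $x_i$ is already the restriction of the same coordinate, so its variation in the $\nu$-direction is captured by the equator's own geometry; more cleanly, $\nu = -\partial_{x_{n+1}}$-type computation shows $\partial x_i/\partial\nu = 0$ for $i \le n$). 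Thus each $x_i$ with $i \le n$ is in $\ker(\mathcal{S}_{g_c}^*)$, and in particular $g_c$ is singular.

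For the reverse inclusion, I would invoke Proposition \ref{contains1}: $\mathbb{S}_+^n$ is Einstein with minimal (indeed totally geodesic) boundary, so $\ker(\mathcal{S}_{g_c}^*)$ is contained in the eigenspace of the Neumann Laplacian with eigenvalue $R_{g_c}/(n-1) = n$. Therefore it suffices to identify the Neumann eigenspace for eigenvalue $n$ on $\mathbb{S}_+^n$. The standard way to do this is via reflection: a Neumann eigenfunction on $\mathbb{S}_+^n$ extends by even reflection across the equator to an eigenfunction on the full sphere $\mathbb{S}^n$ with the same eigenvalue. Eigenvalue $n$ on $\mathbb{S}^n$ corresponds to the space of first spherical harmonics, spanned by $x_1, \dots, x_{n+1}$. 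Among these, the ones invariant under the reflection $x_{n+1} \mapsto -x_{n+1}$ (i.e. even functions, the ones that descend to Neumann data on $\mathbb{S}_+^n$) are exactly $\mathrm{span}\{x_1, \dots, x_n\}$, since $x_{n+1}$ is odd. Hence the Neumann eigenspace for eigenvalue $n$ is $\mathrm{span}\{x_1,\dots,x_n\}$, giving $\ker(\mathcal{S}_{g_c}^*) \subseteq \mathrm{span}\{x_1, \dots, x_n\}$.

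Combining the two inclusions yields $\ker(\mathcal{S}_{g_c}^*) = \mathrm{span}\{x_1, \dots, x_n\}$, as claimed. The one point that requires genuine care — and which I expect to be the main obstacle — is confirming that every $x_i$ with $i \le n$ really does satisfy the full tensorial boundary condition $\frac{\partial f}{\partial \nu}\gamma - f\, II_\gamma = 0$ and the Hessian equation on the nose, rather than merely the traced system \eqref{system2}; since the traced system is only a necessary condition, one must check \eqref{system1} itself. This reduces to the identity $\mathrm{Hess}_{g_c} x_i = -x_i g_c$ on the hemisphere together with the vanishing of $II_\gamma$ and of $\partial x_i/\partial\nu$ along the equator, all of which follow from the explicit geometry of $\mathbb{S}_+^n \subset \mathbb{R}^{n+1}$; the subtlety is just bookkeeping about which coordinates are even versus odd under the equatorial reflection. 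One should also note the hypothesis $n \ge 3$ is used so that Proposition \ref{contains1} applies (Einstein implies constant scalar curvature).
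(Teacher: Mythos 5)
Your proof is correct and follows essentially the same approach as the paper: verify the coordinate functions $x_i$ for $1\le i\le n$ satisfy \eqref{system1} using the Obata identity $\mathrm{Hess}_{g_c}x_i=-x_i g_c$ and the total geodesy of the equator, then invoke Proposition \ref{contains1} plus the characterization of the Neumann eigenspace of eigenvalue $n$. The one small slip is your description of $\nu$ as ``tangent to the hyperplane $\{x_{n+1}=0\}$'' --- it is in fact orthogonal to that hyperplane, equal to $-e_{n+1}$, which is exactly why $\partial x_i/\partial\nu=0$ for $i\le n$; your parenthetical correction recovers the right computation, and your reflection argument for the Neumann eigenspace fills in a fact the paper simply cites as well-known.
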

\begin{proof}
Note that $g_c$ is Einstein and the boundary $\partial \mathbb{S}_+^n$ is totally-geodesic, i.e.
\begin{equation}\label{2.1}
Ric_{g_c}=\displaystyle\frac{R_{g_c}}{n}g_c\mbox{ in }S^n_+~~\mbox{ and }~~
II_{\gamma_c}=0\mbox{ on }\partial \mathbb{S}_+^n,
\end{equation}
where
$R_{g_c}\equiv n(n-1)$. Note also that the coordinate functions $x_i$, $1\leq i\leq n$, satisfy
the following Obata-type equation: (see \cite{chen2019obata} for example)
\begin{equation}\label{2.2}
\mbox{Hess}_{g_c} x_i+x_i\,g_c=0\mbox{ in }S^n_+~~\mbox{ and }~~\frac{\partial x_i}{\partial\nu}=0\mbox{ on }\partial S^n_+.
\end{equation}
Combining (\ref{2.1}) and (\ref{2.2}), we can conclude that the coordinate functions
$x_i$, $1\leq i\leq n$, satisfy \eqref{system1}.
Thus, span$\{x_1, \cdots, x_n\}$ is contained in  $\ker(\mathcal{S}_{g_c}^*)$. In particular, $g_c$ is singular.

On the other hand,
 $x_i$, $1\leq i\leq n$, is an eigenfunction corresponding to
the eigenvalue $n$
of the Laplacian with Neumann boundary condition (this follows from taking trace of (\ref{2.2})).
In fact, it is well-known that the eigenspace is spanned by $x_i$ where $1\leq i\leq n$.
Hence, it follows from Proposition \ref{contains1} that
$$
\ker(\mathcal{S}_{g_c}^*)\subseteq \text{the eigenspace of }n=\mbox{span}\{x_1, \cdots, x_n\}.
$$
This proves the assertion.
\end{proof}

\textbf{The unit ball.} Let $$
D^n=\big\{(x_1, \cdots, x_n)\in \mathbb{R}^{n}\big| x_1^2 +\cdots x_n^2\leq 1\big\}$$
be the $n$-dimensional unit ball equipped with flat metric $g_0$. We have the following:

\begin{prop}\label{prop1.5}
The $n$-dimensional unit ball $(D^n,\partial D^n, g_0)$ equipped with the flat metric is a singular space.
Moreover, we have
$$
\ker(S_{g_0}^*)=\mbox{\emph{span}}\{ x_1, \cdots, x_n\},
$$
where $(x_1,...,x_n)$ are the  coordinates of $D^n$.
\end{prop}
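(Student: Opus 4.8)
The plan is to mirror the strategy used for the upper hemisphere in Proposition~\ref{hemisphere}, splitting the argument into a containment and a reverse containment. The flat metric $g_0$ on $D^n$ is certainly Einstein (in fact Ricci-flat), with $R_{g_0}\equiv 0$, and its boundary $\partial D^n=S^{n-1}$ is umbilical with constant mean curvature: indeed $II_{\gamma_0}=\gamma_0$ so that $H_{\gamma_0}=n-1$. Thus $\frac{H_{\gamma_0}}{n-1}=1$, and Proposition~\ref{Steklov} applies, telling us that $\ker(\mathcal{S}_{g_0}^*)$ lies in the eigenspace of the Steklov operator corresponding to the eigenvalue $1$. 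So the reverse containment reduces to two facts: that the first nonzero Steklov eigenvalue of the flat disk equals $1$, with eigenspace $\mathrm{span}\{x_1,\dots,x_n\}$ (the coordinate functions, being harmonic with $\frac{\partial x_i}{\partial\nu}=x_i$ on the unit sphere, are precisely the $\sigma=1$ Steklov eigenfunctions — this is classical, going back to Steklov), and that every such eigenfunction actually lies in $\ker(\mathcal{S}_{g_0}^*)$, not merely in the Steklov eigenspace.

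For the forward containment, I would verify directly that each coordinate function $x_i$ solves the full system \eqref{system1}. Since $g_0$ is flat, $Ric_{g_0}\equiv 0$, and since $x_i$ is a linear function of the Euclidean coordinates, $\mathrm{Hess}_{g_0}x_i\equiv 0$ and $\Delta_{g_0}x_i\equiv 0$; hence the interior equation $-(\Delta_{g_0}x_i)g_0+\mathrm{Hess}_{g_0}x_i-x_i\,Ric_{g_0}=0$ holds trivially. On the boundary, $\frac{\partial x_i}{\partial\nu}=x_i$ (the outward normal at a point of $S^{n-1}$ is the position vector) and $II_{\gamma_0}=\gamma_0$, so $\frac{\partial x_i}{\partial\nu}\gamma_0-x_i\,II_{\gamma_0}=x_i\gamma_0-x_i\gamma_0=0$. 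Therefore $x_i\in\ker(\mathcal{S}_{g_0}^*)$ for each $i$, which shows $\mathrm{span}\{x_1,\dots,x_n\}\subseteq\ker(\mathcal{S}_{g_0}^*)$ and in particular that $g_0$ is singular.

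For the reverse containment, let $0\not\equiv f\in\ker(\mathcal{S}_{g_0}^*)$. By Proposition~\ref{Steklov}, $f$ is harmonic in $D^n$ with $\frac{\partial f}{\partial\nu}=f$ on $S^{n-1}$. Expanding $f$ in spherical harmonics, $f=\sum_k a_k\,r^k Y_k(\theta)$ with $Y_k$ a degree-$k$ spherical harmonic, the Steklov condition forces $k=1$ for every nonzero term, so $f$ is a linear combination of the $x_i$'s; thus $\ker(\mathcal{S}_{g_0}^*)\subseteq\mathrm{span}\{x_1,\dots,x_n\}$, completing the proof. The only genuinely delicate point is the identification of the Steklov eigenspace — one must make sure there is no interior equation from \eqref{system1} beyond what the trace \eqref{system2} already encodes that would further constrain $f$; but since $g_0$ is Ricci-flat, \eqref{system1} reads $\mathrm{Hess}_{g_0}f=(\Delta_{g_0}f)g_0$, and combined with $\Delta_{g_0}f=0$ this says $\mathrm{Hess}_{g_0}f\equiv 0$, i.e. $f$ is affine, which is consistent with (and in fact already implies) membership in $\mathrm{span}\{1,x_1,\dots,x_n\}$; the boundary condition then kills the constant, recovering exactly $\mathrm{span}\{x_1,\dots,x_n\}$. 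This affine observation actually gives a self-contained route avoiding any appeal to the Steklov spectrum, and I would present it as the main line, with the Steklov viewpoint as a remark.
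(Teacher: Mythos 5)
Your proof is correct, and it contains a genuinely different and arguably cleaner argument for the reverse containment than the paper's. The paper establishes $\ker(\mathcal{S}_{g_0}^*)\subseteq\mathrm{span}\{x_1,\dots,x_n\}$ exactly as in your first route: it passes from the full system \eqref{system1} to its trace \eqref{system2}, observes via Proposition~\ref{Steklov} that any element of the kernel is a Steklov eigenfunction with eigenvalue $1$, and then cites the classical fact (Example~1.3.2 of Girouard--Polterovich) that this Steklov eigenspace on the Euclidean ball is precisely $\mathrm{span}\{x_1,\dots,x_n\}$. Your alternative ``affine'' route does not discard any information when passing to the trace: since $Ric_{g_0}\equiv 0$, the interior equation in \eqref{system1} reads $\mathrm{Hess}_{g_0}f=(\Delta_{g_0}f)\,g_0$, and combined with $\Delta_{g_0}f=0$ (from the trace) this forces $\mathrm{Hess}_{g_0}f\equiv 0$, so $f$ is affine; the Robin boundary condition $\partial_\nu f=f$ then annihilates the constant term. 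This avoids any appeal to the Steklov spectral decomposition or to spherical harmonic expansions, and uses only elementary calculus on the ball. The trade-off is that it exploits the very special interior structure of \eqref{system1} under Ricci-flatness, so it is less robust than the Steklov argument, which works whenever the manifold is merely scalar-flat with umbilical constant-mean-curvature boundary (as Proposition~\ref{Steklov} shows). One small expository quibble: you frame the delicate point as needing to check that \eqref{system1} imposes no extra constraint ``beyond'' \eqref{system2}, but for a $\subseteq$ containment extra constraints could only shrink the kernel and would cause no harm; the Steklov route is already logically complete once the $\sigma=1$ eigenspace is identified.
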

\begin{proof}
There holds
\begin{equation}\label{2.4}
Ric_{g_0}\equiv 0\mbox{ in }D^n~~\mbox{
 and   }II_{\gamma_0}=\frac{H_{\gamma_0}}{n-1}\gamma_0\mbox{ on }\partial D^n,
\end{equation}
where $H_{\gamma_0}=n-1$.
We can easily check that $x_i$, $1\leq i\leq n$, satisfies
\begin{equation}\label{2.5}
\mbox{Hess}_{g_0}x_i=0\mbox{ in }D^n~~\mbox{ and }~~\frac{\partial x_i}{\partial\nu}=x_i\mbox{ on }\partial D^n.
\end{equation}
Combining (\ref{2.4}) and (\ref{2.5}), we can see that $x_i$, $1\leq i\leq n$, satisfies \eqref{system1}.
Therefore, span$\{x_1, \cdots, x_n\}\subseteq \ker(\mathcal{S}_{g_0}^*)$, which implies that $g_0$ is singular.

On the other hand, the eigenspace of the Steklov eigenvalue $1$ is spanned by
$x_i$, where $1\leq i\leq n$ (see Example 1.3.2 in \cite{Girouard&Polterovich} for example).
This together with Proposition \ref{Steklov} implies that
$\ker(\mathcal{S}_{g_0}^*)\subseteq \mbox{span}\{x_1,\cdots, x_n\}$
and the proof is completed.
\end{proof}

\section{Prescribing scalar curvature and mean curvature simultaneously}\label{section_5}

Given a Riemannian manifold with boundary $(M,\partial M,\bar{g})$, we have the following theorem was proved by Cruz and  Vit{\'o}rio
in \cite{cruz2019prescribing}.

\begin{theorem}[Theorem 3.5 in \cite{cruz2019prescribing}]\label{cruz}
Let $f=(f_1, f_2)\in L^p(M)\oplus  W^{\frac{1}{2}, p}(\partial M)$ where $p>n$.
Suppose that $\mathcal{S}_{\bar{g}}^*$ is injective. Then there exists $\eta>0$ such that if
$$\|f_1-R_{\bar{g}}\|_{L^p(M)}+\|f_2-H_{\bar{\gamma}}\|_{W^{\frac{1}{2},p}(\partial M)}< \eta,$$
then there is a metric $g\in \mathcal{M}^{2, p}$ such that $\Psi(g)=f$. Moreover, $g$ is smooth in any open set whenever $f$ is smooth.
\end{theorem}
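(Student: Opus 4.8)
The plan is to recognize this as a standard application of the Inverse Function Theorem (or implicit function theorem) in Banach spaces, with the injectivity of $\mathcal{S}^*_{\bar g}$ being leveraged to produce surjectivity of $\mathcal{S}_{\bar g}$ restricted to a suitable complement. The map $\Psi: \mathcal{M}^{2,p} \to L^p(M) \oplus W^{1-1/p,p}(\partial M)$ (after choosing the correct trace space, which is $W^{1-1/p,p}$ rather than $W^{1/2,p}$ unless $p=2$ — I would follow the paper's conventions here) is a smooth (at least $C^1$) map between Banach manifolds, with differential $\mathcal{S}_{\bar g}$ at $\bar g$. If I can show $\mathcal{S}_{\bar g}$ is surjective with a bounded right inverse, then the IFT gives a local surjection of $\Psi$ onto a neighborhood of $\Psi(\bar g) = (R_{\bar g}, H_{\bar\gamma})$, which is exactly the assertion.

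\medskip

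The first step is to set up the analytic framework: fix $p > n$ so that $W^{2,p}(M) \hookrightarrow C^{1,\alpha}(M)$, giving enough regularity that the curvature operators are well-defined and the nonlinear map $\Psi$ is $C^1$ on a neighborhood of $\bar g$ in the space of $W^{2,p}$ metrics. The second, and main, step is the surjectivity of $\mathcal{S}_{\bar g}$. The operator $\mathcal{S}_{\bar g}\mathcal{S}^*_{\bar g}: C^\infty(M)\times C^\infty(M) \to C^\infty(M)\times C^\infty(M)$ is a formally self-adjoint, second-order elliptic operator (one checks the principal symbol using the formulas \eqref{0.1} and \eqref{1.1} — the leading term of $A^*_g$ is $-(\Delta_g f)g + \mathrm{Hess}_g f$, whose composition with $\mathrm{div}\,\mathrm{div} - \Delta\,\mathrm{tr}$ is elliptic) together with elliptic (Robin-type) boundary conditions coming from $B^*_\gamma$. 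By the Fredholm theory for such boundary value problems, $\mathcal{S}_{\bar g}\mathcal{S}^*_{\bar g}$ has closed range and finite-dimensional kernel/cokernel; since it is self-adjoint, its cokernel equals its kernel. The hypothesis $\ker \mathcal{S}^*_{\bar g} = \{0\}$ forces $\ker(\mathcal{S}_{\bar g}\mathcal{S}^*_{\bar g}) = \{0\}$ (because $(f_1,f_2)$ in that kernel would satisfy $0 = \langle \mathcal{S}_{\bar g}\mathcal{S}^*_{\bar g}(f_1,f_2),(f_1,f_2)\rangle = \|\mathcal{S}^*_{\bar g}(f_1,f_2)\|^2$), hence $\mathcal{S}_{\bar g}\mathcal{S}^*_{\bar g}$ is an isomorphism on the appropriate Sobolev pair. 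Then for any target $(\phi_1,\phi_2)$ we solve $\mathcal{S}_{\bar g}\mathcal{S}^*_{\bar g}(u_1,u_2) = (\phi_1,\phi_2)$ and set $h = \mathcal{S}^*_{\bar g}(u_1,u_2)$, giving $\mathcal{S}_{\bar g}(h) = (\phi_1,\phi_2)$; this exhibits the bounded right inverse $\mathcal{S}^*_{\bar g}(\mathcal{S}_{\bar g}\mathcal{S}^*_{\bar g})^{-1}$.

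\medskip

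The third step is to invoke the inverse (or implicit) function theorem: since $\Psi$ is $C^1$ near $\bar g$ and $D\Psi_{\bar g} = \mathcal{S}_{\bar g}$ admits a bounded right inverse, the image of $\Psi$ contains a neighborhood of $\Psi(\bar g)$; choosing $\eta$ to be the radius of such a neighborhood gives the claim, with the solution metric $g$ of class $W^{2,p}$. The final step is the regularity addendum: on any open set where $f=(f_1,f_2)$ is smooth, elliptic regularity bootstrapping for the (quasilinear, diagonalizable modulo diffeomorphism) system $\Psi(g) = f$ upgrades $g$ to $C^\infty$ there — one typically fixes a gauge (e.g. harmonic coordinates, or works modulo the Bianchi-type degeneracy) to make the system genuinely elliptic and then iterates Schauder/$L^p$ estimates.

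\medskip

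I expect the \textbf{main obstacle} to be the careful verification that the boundary value problem for $\mathcal{S}_{\bar g}\mathcal{S}^*_{\bar g}$ is elliptic in the sense of Lopatinski--Shapiro (so that Fredholm theory and the self-adjoint Fredholm alternative genuinely apply), together with tracking the correct fractional Sobolev trace spaces on $\partial M$ so that $\mathcal{S}_{\bar g}$ and $\mathcal{S}^*_{\bar g}$ map between matching spaces. The underlying degeneracy of the linearized Einstein-type operator (the reason one passes to $\mathcal{S}_{\bar g}\mathcal{S}^*_{\bar g}$ rather than working with $\mathcal{S}_{\bar g}$ directly, and the reason a gauge choice is needed for the regularity step) is the standard technical subtlety here; everything else is routine once that is in place. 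Since this is quoted as Theorem 3.5 of \cite{cruz2019prescribing}, the actual proof is deferred to that reference, and the excerpt above only restates it for use in the present paper.
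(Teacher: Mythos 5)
Your proposal is correct and follows essentially the same strategy that the paper itself uses for its generalization (Theorem \ref{prescribing}) and that the cited reference \cite{cruz2019prescribing} uses for Theorem 3.5: verify that $A^*_{\bar g}$ is properly elliptic and that $B^*_\gamma$ satisfies the Shapiro--Lopatinskij condition, pass through $\mathcal{S}_{\bar g}\mathcal{S}^*_{\bar g}$ to get the Berger--Ebin type splitting and hence surjectivity of $\mathcal{S}_{\bar g}$ when $\ker\mathcal{S}^*_{\bar g}=\{0\}$, and close with a local surjectivity (inverse function) theorem plus elliptic regularity. The paper only cites this result without reproving it, but your reconstruction matches the Fischer--Marsden template it relies on; in particular, your instinct to seek solutions of the form $g=\bar g+\mathcal{S}^*_{\bar g}(u)$ and solve the genuinely elliptic equation in $u$ is precisely the standard device that avoids the gauge degeneracy in the regularity bootstrap.
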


More generally, we have the following:

\begin{theorem}\label{prescribing}
Let $f=(f_1, f_2)\in L^p(M)\oplus  W^{\frac{1}{2}, p}(\partial M)$ where $p>n$.
Define
\begin{equation}\label{kernel}
\Phi:=\left\{(f_1,f_2)\,\left|\,\int_Mf_1fdV_{\bar{g}}+\int_{\partial M}f_2fdA_{\bar{\gamma}}=0\mbox{ for all }f\in\ker\mathcal{S}_{\bar{g}}^*\right.\right\}.
\end{equation}
There exists $\eta>0$ such that if $(f_1,f_2)\in \Phi$ and
$$\|f_1-R_{\bar{g}}\|_{L^p(M)}+\|f_2-H_{\bar{\gamma}}\|_{W^{\frac{1}{2},p}(\partial M)}< \eta,$$
then there is a metric $g\in \mathcal{M}^{2, p}$ such that $\Psi(g)=f$. Moreover, $g$ is smooth in any open set whenever $f$ is smooth.
\end{theorem}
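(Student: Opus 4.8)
The plan is to reduce the problem to a local surjectivity statement for $\Psi$ modulo its cokernel, following the Fischer--Marsden/Lin--Yuan splitting strategy adapted to the boundary setting. First I would fix the reference metric $\bar g$ and work in the Banach manifold $\mathcal{M}^{2,p}$ of $W^{2,p}$-metrics, viewing $\Psi$ as a $C^1$ map into $L^p(M)\oplus W^{1-1/p,p}(\partial M)$ (the natural target of the linearization $\mathcal{S}_{\bar g}$ by \eqref{0.1}). The linearization $\mathcal{S}_{\bar g}$ is an overdetermined-elliptic operator; its $L^2$-formal adjoint $\mathcal{S}_{\bar g}^*$ has kernel $\ker\mathcal{S}_{\bar g}^*$, which by Proposition~\ref{contains1} (or directly from \eqref{system2}) is finite-dimensional when, say, $\bar g$ is Einstein with minimal boundary; in general one should still argue it is finite-dimensional by the ellipticity of the overdetermined system \eqref{system1}--\eqref{system2}. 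The key analytic input is the closed-range statement: $\mathrm{Image}(\mathcal{S}_{\bar g})$ is closed and equals $\Phi$, the $L^2$-orthogonal complement of $\ker\mathcal{S}_{\bar g}^*$ pairing as in \eqref{kernel}. This is where an elliptic estimate for the overdetermined operator $\mathcal{S}_{\bar g}$ together with the corresponding boundary regularity (of the type already invoked in \cite{cruz2019prescribing} to prove Theorem~\ref{cruz}) does the work.

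Next I would set up the decomposition. Let $K=\ker\mathcal{S}_{\bar g}^*$, with $L^2$-orthonormal basis $\phi_1,\dots,\phi_k$ (each $\phi_j\in C^\infty$, since \eqref{system1} is elliptic). Consider the modified map
\begin{equation*}
\tilde\Psi(g):=\Psi(g)+\sum_{j=1}^k\Big(\int_M\big(R_g-R_{\bar g}\big)\phi_j\,dV_{\bar g}+2\int_{\partial M}\big(H_{\gamma}-H_{\bar\gamma}\big)\phi_j\,dA_{\bar\gamma}\Big)\,\psi_j,
\end{equation*}
where $\psi_1,\dots,\psi_k$ are fixed elements of the target chosen so that $\mathcal{S}_{\bar g}$ together with the functionals $(u_1,u_2)\mapsto\int u_1\phi_j+\int u_2\phi_j$ spanning a complement of $\Phi$ makes $D\tilde\Psi_{\bar g}$ surjective with a bounded right inverse. (Equivalently, and more cleanly: project $\Psi-(R_{\bar g},2H_{\bar\gamma})$ onto $\Phi$ by subtracting its $K$-components, and add those components back as explicit finite-dimensional data.) Then $D\tilde\Psi_{\bar g}$ is surjective onto a Banach space, so by the implicit function theorem / Lyapunov--Schmidt there is $\eta>0$ and a $C^1$ local right inverse: any target datum within $\eta$ of $\big(R_{\bar g},2H_{\bar\gamma}\big)$ (in the $L^p\oplus W^{1/2,p}$ norm) that additionally lies in $\Phi$ is hit by some $g\in\mathcal{M}^{2,p}$, since for such data the added finite-dimensional correction terms vanish by the very definition of $\Phi$. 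The smoothness-propagation statement then follows from elliptic regularity for the (now determined, since we have a genuine metric solving $R_g=f_1$, $H_\gamma=f_2$) system on any open set where $f_1,f_2$ are smooth, exactly as in \cite{cruz2019prescribing}.

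I expect the main obstacle to be the closed-range / surjectivity-onto-$\Phi$ step for the overdetermined-elliptic operator $\mathcal{S}_{\bar g}$ with these particular boundary terms: one must show that $L^2(M)\oplus L^2(\partial M)=\mathrm{Image}(\mathcal{S}_{\bar g})\oplus K$ with the image closed, and in the correct $L^p$/$W^{1/2,p}$ functional-analytic framework rather than just $L^2$. This requires the elliptic boundary estimate for $\mathcal{S}_{\bar g}$ (regularity up to $\partial M$, Fredholm property), which is essentially the content of the analysis behind Theorem~\ref{cruz}; the extra work here is only bookkeeping the finite-dimensional kernel and checking that the Lyapunov--Schmidt correction terms are exactly the obstructions cut out by $\Phi$. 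A secondary technical point is verifying that $D\tilde\Psi_{\bar g}$ is not merely surjective but admits a \emph{bounded} right inverse between the chosen Sobolev spaces, so that the inverse/implicit function theorem applies uniformly and yields a single $\eta$; this is routine once closed range is in hand. Everything else—the trace computations \eqref{0.1}, the adjoint \eqref{1.1}, and the characterization of $K$—is already available in the excerpt.
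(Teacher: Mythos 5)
Your proposal is correct and follows essentially the same route as the paper: establish the Berger--Ebin splitting $C^\infty(M)\times C^\infty(\partial M)=\mathrm{Im}\,\mathcal{S}_{\bar g}\oplus\ker\mathcal{S}_{\bar g}^*$ from the overdetermined ellipticity of $\mathcal{S}_{\bar g}^*$ (the Shapiro--Lopatinskij condition checked in Cruz--Vit{\'o}rio), identify $\mathrm{Im}\,\mathcal{S}_{\bar g}=\Phi$, and conclude via a local-surjectivity argument. The only difference is presentational: the paper invokes the Generalized Inverse Function Theorem of Lin--Yuan (Theorem 4.3 there) as a black box, whereas you unfold it into an explicit Lyapunov--Schmidt correction $\tilde\Psi$ and then observe the correction terms vanish for data in $\Phi$ because $\Psi(\bar g)\in\Phi$ automatically.
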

\begin{proof}
It was proved in P.5 of \cite{cruz2019prescribing} that
$A^*_{\bar{g}}$ is elliptic in $M$, and properly elliptic, and $B^*_\gamma$ satisfies the Shapiro-
Lopatinskij condition at any point of the boundary.
Thus, $\mathcal{S}^*_g$ defined in (\ref{1.2}) has injective symbol.
Hence, we have the following decomposition: (see \cite{Berger&Ebin} and \cite{Fischer&Marsden}; see also Theorem 4.1 in \cite{lin2016deformations})
\begin{equation}\label{decomposition}
C^\infty(M)\times C^\infty(\partial M)=\mbox{Im }\mathcal{S}_g\oplus\ker\mathcal{S}^*_g.
\end{equation}
Combining (\ref{kernel}) and (\ref{decomposition}), we have $\mbox{Im }\mathcal{S}_g=\Phi$.
By identifying $\Phi$ with its tangent space, we can see that
the map $\Psi$ defined in (\ref{defn_Psi})
is a submersion at $g$ with respect to $\Phi$.
We can now apply the Generilzed Inverse Function Theorem (c.f. Theorem 4.3 in \cite{lin2016deformations})
and conclude the local subjectivity of $\Phi$ at $g$.
This proves the assertion.
\end{proof}

The following theorem shows that we can prescribe the scalar curvature in $M$ and
the mean curvature on the boundary $\partial M$ simultaneously for
a generic scalar-flat manifold with minimal boundary.

\begin{theorem}\label{presc1}
Suppose that $(M, \partial M, \bar{g})$ is not a singular space such that  $R_{\bar{g}}=0$ in $M$
and $H_{\bar{\gamma}}=0$ on $\partial M$.
Then, for any given functions $f_1\in C^\infty(M)$ and $f_2\in C^\infty(\partial M)$, there exists a metric $g$ such that $R_g=f_1$
in $M$ and $H_\gamma=f_2$ on $\partial M$.
\end{theorem}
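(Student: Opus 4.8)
\textbf{Proof proposal for Theorem \ref{presc1}.}

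The plan is to reduce the theorem to a two-step deformation argument based on Theorem \ref{prescribing}, first moving the scalar curvature and mean curvature a tiny bit away from zero along an explicitly chosen direction, and then invoking the local surjectivity near the new base metric. Since $R_{\bar{g}}=0$ and $H_{\bar{\gamma}}=0$, the hypotheses of Theorem \ref{prescribing} apply, but the pair $(f_1,f_2)$ we ultimately want is far from $(0,0)$, so a single application of the local statement is not enough; we need a global argument on the (possibly large) target. First I would observe that, because $\bar{g}$ is not singular, $\ker\mathcal{S}_{\bar{g}}^*=\{0\}$, so the constraint space $\Phi$ in \eqref{kernel} is the whole of $L^p(M)\oplus W^{\frac{1}{2},p}(\partial M)$; hence Theorem \ref{prescribing} already says $\Psi$ is a local submersion at $\bar{g}$, and in fact Theorem \ref{cruz} applies directly.

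The key step is a scaling/path argument: I would consider the straight-line path $(tf_1,tf_2)$, $t\in[0,1]$, and show the set of $t$ for which $(tf_1,tf_2)$ is realized as $(R_g,2H_\gamma)$ for some metric $g$ is open and closed in $[0,1]$. Openness at any $t_0$ with realizing metric $g_{t_0}$ follows from Theorem \ref{cruz} applied at the base metric $g_{t_0}$, \emph{provided} $\mathcal{S}_{g_{t_0}}^*$ is injective, i.e. $g_{t_0}$ is not singular; the obstacle is that non-singularity need not be preserved along the path. To handle this I would instead use a homogeneity argument available because scalar and mean curvature scale under constant rescaling of the metric: if $g$ realizes $(f_1,f_2)$ then $c^2 g$ realizes $(c^{-2}f_1, c^{-1}f_2)$, which lets one trade a genuinely large target for a small one after a suitable rescaling, combined with a reparametrization of the path; applying Theorem \ref{cruz} at $\bar g$ to the rescaled small data and then undoing the rescaling yields the desired metric. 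Closedness of the realizable set would use the smoothness/regularity clause in Theorem \ref{cruz} together with elliptic estimates to extract a convergent subsequence of metrics, or can be bypassed entirely if the rescaling argument directly produces a metric for $t=1$.

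The main obstacle I anticipate is precisely the loss of the non-singularity (injectivity of $\mathcal{S}_g^*$) hypothesis as we deform the metric: Theorem \ref{cruz} and Theorem \ref{prescribing} are only local around a \emph{non-singular} base point, so one cannot naively iterate them along a path of metrics whose singularity status is uncontrolled. The cleanest way around this, and the route I would pursue, is to exploit the scaling covariance of $(R_g,2H_\gamma)$ so that the entire problem is reduced to a \emph{single} application of Theorem \ref{cruz} at the fixed non-singular metric $\bar g$ with arbitrarily small perturbation data, after which the target with large $f_1,f_2$ is recovered by a constant conformal rescaling; the smoothness assertion then comes for free from the last sentence of Theorem \ref{cruz}.
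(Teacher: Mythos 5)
Your final argument is exactly the paper's: since $R_{\bar g}=0$ and $H_{\bar\gamma}=0$, rescale $(f_1,f_2)$ to $(f_1/L,f_2/L^{1/2})$ with $L$ large to land inside the $\eta$-neighborhood of Theorem \ref{cruz}, obtain a realizing metric $g$, and then pass to $L^{-1}g$ to recover the original data. The preliminary discussion of an openness/closedness path argument is unnecessary scaffolding, but the scaling route you settle on is precisely what the paper does.
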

\begin{proof}
Let $f_1\in C^\infty(M)$ and $f_2\in C^\infty(\partial M)$.
 Since $M$ is compact, we can choose $L>0$ large enough such that
\begin{align}\label{inequ1}
\frac{\|f_1\|_\infty}{L}+\frac{\|f_2\|_\infty}{L^{1/2}}<\eta,
\end{align}
where $\eta>0$ is given as in Theorem \ref{cruz}.
Since $R_{\bar{g}}=0$ in $M$ and $H_{\bar{\gamma}}=0$ on $\partial M$ by assumption, the inequality \eqref{inequ1} can be written as
\begin{align}\label{ineq2}
\left\| \frac{f_1}{L}-R_{\bar{g}}\right\|_\infty +\left\| \frac{f_2}{L^{1/2}}-H_{\bar{\gamma}}\right\|_\infty < \eta.
\end{align}
We can now apply Theorem \ref{cruz} to conclude that  $R_g=\displaystyle\frac{f_1}{L}$ in $M$ and
$H_\gamma=\displaystyle\frac{f_2}{L^{1/2}}$ on $\partial M$
for some smooth metric $g$. Thus the metric $L^{-1}g$ satisfies
$$R_{L^{-1}g}=LR_g=L(\frac{f_1}{L})=f_1~~\mbox{ in }M,$$
and
$$H_{L^{-1}\gamma}=L^{1/2}H_\gamma=L^{1/2}(\frac{f_2}{L^{1/2}})=f_2~~\mbox{ on }M,$$
as required.
\end{proof}

As we have seen in  section \ref{section_example},
the product manifold
$M=[0,1]\times S^2\times \Sigma$
equipped with the metric $dt^2+g_1+g_{-1}$
is scalar-flat, has totally geodesic boundary, and is not singular,
where
$S^2$ is
the $2$-dimensional unit sphere equipped with the standard metric $g_1$,
and $\Sigma$ be a $2$-dimensional compact manifold
with genus at least 2 equipped with the hyperbolic metric $g_{-1}$.
Combining this with Theorem \ref{presc1}, we have the following:

\begin{cor}
Let $M=[0,1]\times S^2\times \Sigma$.
For any $f_1\in C^\infty(M)$ and $f_2\in C^\infty(\partial M)$,
there exists a metric $g$ such that $R_g=f_1$
in $M$ and $H_\gamma=f_2$ on $\partial M$.
\end{cor}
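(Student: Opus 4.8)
The plan is to verify that the manifold $M=[0,1]\times S^2\times\Sigma$ equipped with the product metric $g=dt^2+g_1+g_{-1}$ satisfies all the hypotheses of Theorem \ref{presc1}, and then simply invoke that theorem. Thus the proof is essentially a bookkeeping exercise that assembles facts already established in the Examples section.

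First I would record the curvature data. The standard round metric $g_1$ on $S^2$ has constant positive scalar curvature $R_{g_1}=2$, while the hyperbolic metric $g_{-1}$ on a genus $\geq 2$ surface $\Sigma$ has constant negative scalar curvature $R_{g_{-1}}=-2$. For a product metric the scalar curvature is the sum of the scalar curvatures of the factors and the interval contributes nothing, so $R_g=0+2+(-2)=0$ in $M$; hence $(M,g)$ is scalar-flat. On the other hand the Ricci tensor of a product is the direct sum of the Ricci tensors of the factors, and $Ric_{g_1}=g_1\neq 0$, so $Ric_g\not\equiv 0$, i.e.\ $g$ is not Ricci-flat. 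Next, the boundary $\partial M=(\{0\}\times S^2\times\Sigma)\cup(\{1\}\times S^2\times\Sigma)$ consists of two copies of $S^2\times\Sigma$, each a totally geodesic slice $\{t\}\times S^2\times\Sigma$ of the product, so the second fundamental form $II_\gamma\equiv 0$ and therefore $H_{\bar\gamma}=0$ on $\partial M$.

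With these facts in hand, the manifold satisfies $R_{\bar g}=0$ in $M$ and $H_{\bar\gamma}=0$ on $\partial M$, and since $Ric_g\not\equiv 0$ it is not Ricci-flat; hypothesis (1) of Proposition \ref{existnonsing} applies, so $g$ is not singular. Now all hypotheses of Theorem \ref{presc1} are met: $(M,\partial M,\bar g)$ is not a singular space with $R_{\bar g}=0$ and $H_{\bar\gamma}=0$. Applying Theorem \ref{presc1} directly yields, for any prescribed $f_1\in C^\infty(M)$ and $f_2\in C^\infty(\partial M)$, a metric $g$ on $M$ with $R_g=f_1$ in $M$ and $H_\gamma=f_2$ on $\partial M$, which is exactly the assertion of the corollary.

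There is really no serious obstacle here: every ingredient has already been proven. The only mild subtlety worth spelling out is the verification that the product metric is non-singular rather than merely scalar-flat with minimal boundary, which is why one must check $Ric_g\not\equiv 0$ (equivalently, invoke Proposition \ref{existnonsing} with its alternative hypothesis) rather than Proposition \ref{supseteq}; indeed the preceding discussion in the Examples section already carried this out, so the corollary follows by combining that observation with Theorem \ref{presc1}.
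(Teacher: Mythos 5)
Your proof is correct and follows essentially the same route as the paper: the paper's "proof" of this corollary is simply a pointer back to the earlier discussion in the Examples section (where scalar-flatness, totally geodesic boundary, and non-singularity of $[0,1]\times S^2\times\Sigma$ are established via Proposition \ref{existnonsing}), combined with Theorem \ref{presc1}. You have reproduced that verification in detail and applied the same theorem, so the two arguments coincide.
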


We also have the following:

\begin{theorem}\label{thm2}
Suppose $(M,\partial M, \bar{g})$ is Ricci-flat with totally-geodesic boundary.
For any  $(f_1, f_2)\in \Phi_0$ where
\begin{equation}\label{Phi_0}
\Phi_0:=\left\{ (f_1, f_2)\in C^\infty(M)\times C^\infty(\partial M)\left|\int_M f_1 dV_{\bar{g}}=\int_{\partial M} f_2 dA_{\bar{\gamma}}=0\right.\right\},
\end{equation}
there exists a metric $g$ such that $R_g=f_1$ in $M$ and $H_\gamma=f_2$ on $\partial M$.
\end{theorem}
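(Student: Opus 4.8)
The plan is to deduce Theorem~\ref{thm2} from Theorem~\ref{prescribing} by a rescaling argument, following the pattern of the proof of Theorem~\ref{presc1}.

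First I would identify the obstruction space in the present situation. Since $(M,\partial M,\bar{g})$ is Ricci-flat with totally-geodesic boundary, Proposition~\ref{Ricci-flat} gives $\ker(\mathcal{S}_{\bar{g}}^*)=\{\text{constants}\}$. Substituting the single generator $f\equiv 1$ into \eqref{kernel}, the space $\Phi$ appearing in Theorem~\ref{prescribing} becomes exactly
\[
\Phi=\Big\{(f_1,f_2)\ \Big|\ \int_M f_1\,dV_{\bar{g}}+\int_{\partial M}f_2\,dA_{\bar{\gamma}}=0\Big\},
\]
so in particular $\Phi_0\subseteq\Phi$. Moreover $R_{\bar{g}}\equiv 0$ in $M$ and $H_{\bar{\gamma}}\equiv 0$ on $\partial M$, so the smallness hypothesis of Theorem~\ref{prescribing} reduces to $\|f_1\|_{L^p(M)}+\|f_2\|_{W^{1/2,p}(\partial M)}<\eta$.

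Next, given $(f_1,f_2)\in\Phi_0$, I would introduce for $L>0$ the rescaled data $\tilde f_1:=L^{-1}f_1$ and $\tilde f_2:=L^{-1/2}f_2$. Because $\int_M f_1\,dV_{\bar{g}}=0$ and $\int_{\partial M}f_2\,dA_{\bar{\gamma}}=0$ hold \emph{separately}, the same identities hold for $\tilde f_1$ and $\tilde f_2$, so $(\tilde f_1,\tilde f_2)\in\Phi_0\subseteq\Phi$ for every $L$. Since $M$ is compact I may then pick $L$ large enough that $\|\tilde f_1\|_{L^p(M)}+\|\tilde f_2\|_{W^{1/2,p}(\partial M)}<\eta$, and Theorem~\ref{prescribing} produces a metric $g'$ with $R_{g'}=L^{-1}f_1$ in $M$ and $H_{\gamma'}=L^{-1/2}f_2$ on $\partial M$; this $g'$ is smooth because $f_1$ and $f_2$ are. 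Finally, setting $g:=L^{-1}g'$ and using the scaling identities $R_{L^{-1}g'}=LR_{g'}$ and $H_{L^{-1}\gamma'}=L^{1/2}H_{\gamma'}$ (as in the proof of Theorem~\ref{presc1}) yields $R_g=f_1$ in $M$ and $H_\gamma=f_2$ on $\partial M$, which finishes the proof.

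I do not expect a genuine analytic obstacle here once Theorem~\ref{prescribing} is available; the delicate point is purely structural, and it is the step I would check most carefully. Namely, the interior integral constraint scales by $L^{-1}$ while the boundary one scales by $L^{-1/2}$, so a single dilation parameter $L$ preserves membership in the constraint space only if the two integrals vanish individually — which is precisely why the hypothesis is formulated in terms of $\Phi_0$ in \eqref{Phi_0} rather than the a priori larger set $\Phi$ of \eqref{kernel}.
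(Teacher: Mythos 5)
Your proof is correct and follows essentially the same route as the paper: identify $\ker(\mathcal{S}_{\bar{g}}^*)=\{\text{constants}\}$ via Proposition~\ref{Ricci-flat}, note $\Phi_0\subseteq\Phi$, rescale the data by $(L^{-1},L^{-1/2})$, apply Theorem~\ref{prescribing}, and undo the rescaling. Your closing observation that the asymmetric scaling of the interior and boundary integrals is exactly why the hypothesis must be the smaller space $\Phi_0$ (individual vanishing) rather than $\Phi$ (combined vanishing) is correct and makes explicit a point the paper's proof leaves tacit.
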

\begin{proof}
If $(M,\partial M,\bar{g})$ is Ricci-flat with totally-geodesic boundary, it follows from Proposition \ref{Ricci-flat}
that
\begin{equation*}
\ker(\mathcal{S}_{\bar{g}}^*)=\{\mbox{constant}\}.
\end{equation*}
Hence,  $\Phi_0$ defined in (\ref{Phi_0}) is contained in $\Phi$ defined in (\ref{kernel}).
Let $(f_1,f_2)\in \Phi_0$. We can choose $L>0$ sufficiently large  such that
\begin{equation*}
\left\| \frac{f_1}{L}-R_{\bar{g}}\right\|_\infty +\left\| \frac{f_2}{L^{1/2}}-H_{\bar{\gamma}}\right\|_\infty < \eta
\end{equation*}
where $\eta$ is the given as in Theorem \ref{prescribing}.
Since $(f_1/L,f_2/L^{1/2})\in \Phi_0\subset\Phi$, it follows from Theorem \ref{prescribing} that
$R_g=\displaystyle\frac{f_1}{L}$ in $M$ and
$H_\gamma=\displaystyle\frac{f_2}{L^{1/2}}$ on $\partial M$
for some smooth metric $g$ closed to $\bar{G}$. Thus the metric $L^{-1}g$ satisfies
$$R_{L^{-1}g}=LR_g=L(\frac{f_1}{L})=f_1~~\mbox{ in }M,$$
and
$$H_{L^{-1}\gamma}=L^{1/2}H_\gamma=L^{1/2}(\frac{f_2}{L^{1/2}})=f_2~~\mbox{ on }M,$$
as required.
\end{proof}

As we have seen in section \ref{section_example},
for any closed Ricci-flat $(M,g)$,
the product manifold
$\tilde{M}=[0,1]\times M$ equipped with the product metric
$\tilde{g}=dt^2+g$ is Ricci-flat with totally-geodesic boundary.
Therefore, from Theorem \ref{thm2}, we immediately have the following

\begin{cor}
Suppose $(M,g)$ is a closed Ricci-flat manifold.
Let
$\tilde{M}=[0,1]\times M$ be the product manifold
equipped with the product metric $\tilde{g}=dt^2+g$.
Then, for any  $(f_1, f_2)\in C^\infty(\tilde{M},\partial\tilde{M})$ such that
$$
\int_{\tilde{M}} f_1 dV_{\tilde{g}}=0~~\mbox{ and }~~\int_{\partial \tilde{M}} f_2 dA_{\tilde{\gamma}}=0,
$$
there exists a metric $g$ such that $R_g=f_1$ in $\tilde{M}$ and $H_\gamma=f_2$ on $\partial \tilde{M}$.
\end{cor}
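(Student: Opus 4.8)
The plan is to deduce this corollary directly from Theorem \ref{thm2} together with the example construction already recorded in section \ref{section_example}. First I would observe that if $(M,g)$ is a closed Ricci-flat manifold, then the product $\tilde{M}=[0,1]\times M$ with product metric $\tilde{g}=dt^2+g$ is again Ricci-flat: the curvature of a Riemannian product is the direct sum of the curvatures of the factors, and the $[0,1]$ factor is flat, so $\operatorname{Ric}_{\tilde{g}}=\operatorname{Ric}_g=0$. Moreover each boundary slice $\{0\}\times M$ and $\{1\}\times M$ is totally geodesic in $\tilde{M}$, since the second fundamental form of a slice $\{t\}\times M$ in a product $[0,1]\times M$ vanishes identically (the normal direction $\partial_t$ is parallel). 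Hence $(\tilde{M},\partial\tilde{M},\tilde{g})$ satisfies the hypotheses of Theorem \ref{thm2}.

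Next I would simply invoke Theorem \ref{thm2} with $\bar{g}=\tilde{g}$. Given $(f_1,f_2)\in C^\infty(\tilde{M})\times C^\infty(\partial\tilde{M})$ with $\int_{\tilde{M}}f_1\,dV_{\tilde{g}}=0$ and $\int_{\partial\tilde{M}}f_2\,dA_{\tilde{\gamma}}=0$, the pair $(f_1,f_2)$ lies in the space $\Phi_0$ defined in \eqref{Phi_0}. Theorem \ref{thm2} then produces a metric $g$ on $\tilde{M}$ with $R_g=f_1$ in $\tilde{M}$ and $H_\gamma=f_2$ on $\partial\tilde{M}$, which is exactly the conclusion. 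One small bookkeeping point is the clash of notation: the corollary uses $g$ both for the metric on the closed factor $M$ and for the output metric on $\tilde{M}$; in a careful write-up I would rename the output metric (say $\hat{g}$) to avoid confusion, but this is cosmetic.

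There is really no substantive obstacle here — the corollary is a specialization of Theorem \ref{thm2}, and all the analytic content (the decomposition \eqref{decomposition}, the Generalized Inverse Function Theorem, the rescaling trick to meet the smallness threshold $\eta$) is already absorbed into that theorem. The only thing that needs genuine verification is the geometric claim that the product construction yields a Ricci-flat manifold with totally geodesic boundary, and this is the standard fact about Riemannian products recalled above; indeed it was already asserted in section \ref{section_example}. If I wanted to make the proof fully self-contained I would spell out the product curvature computation, but since the excerpt permits citing earlier material, a two-line proof referencing the relevant example and Theorem \ref{thm2} suffices.

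\begin{proof}
Since $(M,g)$ is closed and Ricci-flat, the product manifold $\tilde{M}=[0,1]\times M$ with the product metric $\tilde{g}=dt^2+g$ is Ricci-flat, and its boundary $\partial\tilde{M}=(\{0\}\times M)\cup(\{1\}\times M)$ is totally geodesic, as explained in section \ref{section_example}. Hence $(\tilde{M},\partial\tilde{M},\tilde{g})$ satisfies the hypotheses of Theorem \ref{thm2}. Given $(f_1,f_2)\in C^\infty(\tilde{M})\times C^\infty(\partial\tilde{M})$ with
$$\int_{\tilde{M}}f_1\,dV_{\tilde{g}}=0\qquad\text{and}\qquad\int_{\partial\tilde{M}}f_2\,dA_{\tilde{\gamma}}=0,$$
the pair $(f_1,f_2)$ belongs to the space $\Phi_0$ of \eqref{Phi_0} associated to $\bar{g}=\tilde{g}$. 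By Theorem \ref{thm2}, there exists a metric on $\tilde{M}$ whose scalar curvature equals $f_1$ in $\tilde{M}$ and whose mean curvature equals $f_2$ on $\partial\tilde{M}$, which is the desired conclusion.
\end{proof}
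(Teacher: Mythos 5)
Your proof is correct and follows exactly the paper's own route: the paper obtains this corollary by noting (as in Section \ref{section_example}) that $[0,1]\times M$ with the product metric is Ricci-flat with totally geodesic boundary, and then invoking Theorem \ref{thm2}. Your added remark about the notational clash in the statement (using $g$ for both the metric on $M$ and the output metric) is a fair observation but does not change the substance.
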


Next
we have the following theorem of prescribing the scalar curvature
and the mean curvature simultaneously  on the upper hemisphere.

\begin{theorem}
Let $f_1\in C^\infty(\mathbb{S}_+^n)$ and $f_2\in C^\infty(\partial \mathbb{S}_+^n)$ such that
$$\int_{\mathbb{S}_+^n} x_i f_1 dV_{g_c}=\int_{\partial \mathbb{S}_+^n} x_i f_2 dA_{\gamma_c}=0~~\mbox{ for }1\leq i\leq n.$$
Then there exists a metric $g$ such that $R_g=f_1$ in $\mathbb{S}^n_+$ and $H_\gamma=f_2$ on $\partial \mathbb{S}^n_+$.
\end{theorem}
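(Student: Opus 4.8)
The plan is to recognize the two integral conditions on $(f_1,f_2)$ as exactly the hypothesis that makes Theorem~\ref{prescribing} applicable with $\bar g=g_c$, and then to promote that local statement to the asserted global one by a rescaling in the spirit of Theorems~\ref{presc1} and~\ref{thm2}. First I would invoke Proposition~\ref{hemisphere}: $\ker\mathcal{S}_{g_c}^*=\operatorname{span}\{x_1,\dots,x_n\}$, so the assumption $\int_{\mathbb{S}_+^n}x_i f_1\,dV_{g_c}=\int_{\partial\mathbb{S}_+^n}x_i f_2\,dA_{\gamma_c}=0$ for all $i$ implies in particular $(f_1,f_2)\in\Phi$ relative to $g_c$. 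I would also record the elementary fact that each reflection $x_i\mapsto-x_i$ is an isometry of $(\mathbb{S}_+^n,g_c)$ preserving $\partial\mathbb{S}_+^n$, so that $\int_{\mathbb{S}_+^n}x_i\,dV_{g_c}=0$ and $\int_{\partial\mathbb{S}_+^n}x_i\,dA_{\gamma_c}=0$ for $1\le i\le n$. Since $R_{g_c}=n(n-1)$ and $H_{\gamma_c}=0$, this gives $(R_{g_c},H_{\gamma_c})\in\Phi$ and, more usefully, that for every $L>0$ the pair
\[
\big(\tfrac1L f_1+(1-\tfrac1L)\,n(n-1),\ \tfrac1{L^{1/2}}f_2\big)
\]
again lies in $\Phi$; this is the analogue, for the hemisphere, of the role played by $\Phi_0$ in the proof of Theorem~\ref{thm2}.

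For $L$ large the displayed pair is as $C^0$-close to $(R_{g_c},H_{\gamma_c})=(n(n-1),0)$ as one wishes, so Theorem~\ref{prescribing} produces a metric $\tilde g$ (close to $g_c$, and smooth) with $R_{\tilde g}=\tfrac1L f_1+(1-\tfrac1L)n(n-1)$ and $H_{\tilde\gamma}=\tfrac1{L^{1/2}}f_2$. It remains to convert $\tilde g$ into a metric realizing the \emph{exact} targets $f_1$ and $f_2$, using the homothety rules $R_{cg}=c^{-1}R_g$ and $H_{c\gamma}=c^{-1/2}H_\gamma$.

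This conversion is the main obstacle, and it is where the hemisphere genuinely differs from the scalar-flat settings of Theorems~\ref{presc1} and~\ref{thm2}: because $R_{g_c}=n(n-1)\neq0$, a single homothety $c\tilde g$ cannot at once cancel the factor $\tfrac1L$ and absorb the additive constant $(1-\tfrac1L)n(n-1)$. To get around this I would re-run the construction around the homothetically rescaled background $L^2g_c$ — which has the same kernel $\operatorname{span}\{x_i\}$, the same space $\Phi$, and scalar curvature $L^{-2}n(n-1)\to0$ — then carefully track how the $L^p(M)$ and $W^{\frac12,p}(\partial M)$ norms, and above all the constant $\eta$ of Theorem~\ref{prescribing}, transform under $g\mapsto L^2g$, before rescaling the resulting metric back by $L^{-2}$. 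Checking that $\eta$ scales at least as favourably as the distance from the rescaled target to $(R_{L^2g_c},H_{L^2\gamma_c})$ does is the delicate point I expect to cost the most work; an alternative, if one admits a Kazdan--Warner--type existence statement for manifolds with boundary, is to replace $g_c$ throughout by a scalar-flat metric with minimal boundary on $\mathbb{S}_+^n$ — necessarily non-singular by Proposition~\ref{existnonsing}, since $\mathbb{S}_+^n$ admits no Ricci-flat metric with totally geodesic boundary — and then to quote Theorem~\ref{presc1} directly.
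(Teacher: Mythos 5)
Your proposal correctly recognizes the structure the paper uses (Proposition~\ref{hemisphere} to identify $\ker\mathcal{S}_{g_c}^*$, then Theorem~\ref{prescribing} plus rescaling), and — more importantly — you put your finger on a genuine gap that the paper itself glosses over. The paper's proof of this theorem consists of one sentence: "follow the same argument as in the proof of Theorem~\ref{thm2}." But that argument hinges on $(R_{\bar g},H_{\bar\gamma})=(0,0)$: one shrinks the target to $(f_1/L,\,f_2/L^{1/2})$, which then converges to $(R_{\bar g},H_{\bar\gamma})$ as $L\to\infty$, applies Theorem~\ref{prescribing}, and undoes the shrinkage with the homothety $g\mapsto L^{-1}g$. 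On $\mathbb{S}^n_+$ one has $R_{g_c}=n(n-1)\neq 0$, so $(f_1/L,f_2/L^{1/2})\to(0,0)\neq(R_{g_c},H_{\gamma_c})$ and the smallness hypothesis of Theorem~\ref{prescribing} cannot be met for large $L$ unless $\eta>n(n-1)$, which is not guaranteed; your alternative interpolation $\tfrac1L f_1+(1-\tfrac1L)n(n-1)$ does sit near $R_{g_c}$, but then, as you note, no single homothety can simultaneously undo the $\tfrac1L$ and kill the additive $(1-\tfrac1L)n(n-1)$. So "the same argument" does not apply, and you are right to flag this.

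Your proposed repairs do not yet close the gap. The background-rescaling idea ($g_c\mapsto L^2 g_c$) is essentially neutralized by scaling covariance: the whole implicit-function-theorem package behind Theorem~\ref{prescribing} — the operator $\mathcal{S}_{\bar g}$, the decomposition \eqref{decomposition}, the quantitative surjectivity — transforms by the same homothety that shrinks $R_{L^2 g_c}$, so one expects $\eta(L^2 g_c)$ to shrink at the same rate as the $L^p$-distance from the rescaled target to $(R_{L^2 g_c},H_{L^2\gamma_c})$, leaving the inequality unchanged. The second alternative (swap $g_c$ for a non-singular scalar-flat metric with minimal boundary and quote Theorem~\ref{presc1}) is more promising, but requires two unproven ingredients — the existence of such a metric on $\mathbb{S}^n_+$, and that $\mathbb{S}^n_+$ admits no Ricci-flat metric with totally geodesic boundary — and, if carried out, would prove the conclusion with \emph{no} integral conditions at all, which sits awkwardly with the theorem as stated. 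In short: your critique of the rescaling step is correct and exposes that the paper's own proof, as written, does not establish the theorem; your sketched fixes identify the right difficulty but leave it open.
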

\begin{proof}
It follows from
Proposition \ref{hemisphere} that
$\ker(\mathcal{S}_{g_c}^*)=\mbox{span}\{x_1, \cdots, x_n \}$. Hence,
the space
$$\left\{(f_1,f_2)\in C^\infty(\mathbb{S}_+^n)\times C^\infty(\partial \mathbb{S}_+^n)\left|\,\int_{\mathbb{S}_+^n} x_i f_1 dV_{g_c}=\int_{\partial \mathbb{S}_+^n} x_i f_2 dA_{g_c}=0~~\mbox{ for }1\leq i\leq n\right.\right\}$$
lies in  $\Phi$ defined in (\ref{kernel}).
Using this, we can follow the same argument as in the proof of Theorem \ref{thm2}
to finish the proof.
\end{proof}

Finally we have the following theorem of prescribing  the scalar curvature
and the mean curvature simultaneously on the unit ball.

\begin{theorem}
Given any  $f_1\in C^\infty(D^n)$ and $f_2\in C^\infty(\partial D^n)$ such that
\begin{equation}\label{condition1}
\int_{D^n}f_1 x_i dV_{g_0}=\int_{\partial D^n} f_2x_i dA_{\gamma_0}=0~~\mbox{ for any }1\leq i\leq n.
\end{equation}
Then there exists a metric $g$ such that $R_g=f_1$ in $D^n$ and $H_\gamma=f_2$ on $\partial D^n$.
\end{theorem}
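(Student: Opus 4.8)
The plan is to mirror the proof of Theorem \ref{thm2} almost verbatim, using Proposition \ref{prop1.5} to pin down the kernel $\ker(\mathcal{S}_{g_0}^*)$ on the unit ball. First I would invoke Proposition \ref{prop1.5} to conclude that $\ker(\mathcal{S}_{g_0}^*)=\mbox{span}\{x_1,\dots,x_n\}$. Consequently, the orthogonality condition \eqref{condition1} is precisely the statement that $(f_1,f_2)$ lies in the space $\Phi$ defined in \eqref{kernel}, since integrating $f_1$ against any element of $\ker(\mathcal{S}_{g_0}^*)$ and $f_2$ against its boundary restriction amounts to the finitely many conditions $\int_{D^n} f_1 x_i\, dV_{g_0}+\int_{\partial D^n} f_2 x_i\, dA_{\gamma_0}=0$; and \eqref{condition1} forces each of the two summands to vanish separately, so in particular the sum vanishes. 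Hence $(f_1,f_2)\in\Phi$.

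Next I would introduce a scaling parameter $L>0$. Since $D^n$ is compact and $f_1,f_2$ are smooth, hence bounded, we may choose $L$ large enough that
\begin{equation*}
\left\|\frac{f_1}{L}-R_{g_0}\right\|_{L^p(D^n)}+\left\|\frac{f_2}{L^{1/2}}-H_{\gamma_0}\right\|_{W^{1/2,p}(\partial D^n)}<\eta,
\end{equation*}
where $\eta>0$ is the constant from Theorem \ref{prescribing}. Here one must be slightly careful: $R_{g_0}\equiv 0$ but $H_{\gamma_0}\equiv n-1\neq 0$, so unlike the scalar-flat totally geodesic case this is genuinely a smallness-of-perturbation statement rather than a smallness-of-$f$ statement. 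This is not a problem, however: the pair $(f_1/L, f_2/L^{1/2})$ still lies in $\Phi$ because $\Phi$ is a linear subspace and rescaling $f_1$ by $1/L$ and $f_2$ by $1/L^{1/2}$ preserves the vanishing of the two integrals in \eqref{condition1}. Then Theorem \ref{prescribing} yields a metric $g\in\mathcal{M}^{2,p}$, smooth since $f_1,f_2$ are smooth, with $R_g=f_1/L$ in $D^n$ and $H_\gamma=f_2/L^{1/2}$ on $\partial D^n$.

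Finally I would apply the scaling behaviour of the scalar and mean curvature: under $g\mapsto L^{-1}g$ one has $R_{L^{-1}g}=L R_g$ and $H_{L^{-1}\gamma}=L^{1/2}H_\gamma$ (in dimension $n$, scaling the metric by $c$ scales $R$ by $c^{-1}$ and $H$ by $c^{-1/2}$). Therefore the metric $L^{-1}g$ satisfies $R_{L^{-1}g}=L\cdot(f_1/L)=f_1$ in $D^n$ and $H_{L^{-1}\gamma}=L^{1/2}\cdot(f_2/L^{1/2})=f_2$ on $\partial D^n$, which is exactly the desired conclusion.

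I do not expect a genuine obstacle here, since the argument is a direct adaptation of Theorem \ref{thm2}; the only points requiring a moment's care are (i) checking that \eqref{condition1}, with its two separately-vanishing integrals, does imply membership in $\Phi$, whose defining condition only asks for the \emph{sum} over $M$ and $\partial M$ to vanish — so \eqref{condition1} is in fact slightly stronger than needed, which is harmless — and (ii) noting that because $\ker(\mathcal{S}_{g_0}^*)$ is nontrivial, Theorem \ref{cruz} does not apply directly and one genuinely needs the more general Theorem \ref{prescribing}. Everything else is the same rescaling trick already used twice above.
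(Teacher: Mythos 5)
Your overall route is the same as the paper's: invoke Proposition~\ref{prop1.5} to identify $\ker(\mathcal{S}_{g_0}^*)=\mathrm{span}\{x_1,\dots,x_n\}$, observe that condition~\eqref{condition1} places $(f_1,f_2)$ in the space $\Phi$ of~\eqref{kernel}, and then run the rescaling argument of Theorem~\ref{thm2} to reduce to the local surjectivity statement of Theorem~\ref{prescribing}. That is exactly what the paper does, and it is also a one-liner in the paper (``follow the same argument as in the proof of Theorem~\ref{thm2}'').

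However, you notice a genuine difficulty and then dismiss it incorrectly. You correctly observe that $R_{g_0}\equiv 0$ while $H_{\gamma_0}\equiv n-1\neq 0$, so that the case at hand is no longer the ``scalar-flat, minimal boundary'' situation of Theorems~\ref{presc1} and \ref{thm2}. Your stated reason this is ``not a problem'' is that $\Phi$ is a linear space, so $(f_1/L,\,f_2/L^{1/2})$ still lies in $\Phi$. That addresses the wrong obstruction. Membership in $\Phi$ is indeed preserved under the scaling, but the smallness hypothesis of Theorem~\ref{prescribing} is
\begin{equation*}
\left\|\frac{f_1}{L}-R_{g_0}\right\|_{L^p(D^n)}+\left\|\frac{f_2}{L^{1/2}}-H_{\gamma_0}\right\|_{W^{1/2,p}(\partial D^n)}<\eta ,
\end{equation*}
and as $L\to\infty$ the second term converges to $\|n-1\|_{W^{1/2,p}(\partial D^n)}$, a fixed positive number which has no reason to be less than the (given, not chosen) $\eta$. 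In fact, taking $f_2\equiv 0$ already satisfies \eqref{condition1} but makes the second term identically $\|n-1\|_{W^{1/2,p}(\partial D^n)}$ for every $L$. Thus the rescaling argument as written, both here and in the paper's terse reference, does not close the case $H_{\gamma_0}\neq 0$; one would need a different device (for instance, first deforming $g_0$ within the ball to a nearby nonsingular scalar-flat metric with minimal boundary, or choosing $L$ near $\bigl(\|f_2\|/(n-1)\bigr)^2$ and controlling how far $f_2/L^{1/2}$ is from the constant $n-1$, which is only small when $f_2$ is close to a constant). As it stands, the step you flagged and then waved away is a real gap.
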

\begin{proof}
It follows from Proposition \ref{prop1.5} that
$\ker(\mathcal{S}_{g_0}^*)=\mbox{span}\{x_1, \cdots, x_n \}$. Hence,
the space of all $(f_1,f_2)$ satisfying (\ref{condition1})
lies in  $\Phi$ defined in (\ref{kernel}).
Hence,  we can follow the same argument as in the proof of Theorem \ref{thm2}
to finish the proof.
\end{proof}

\section{Rigidity results}\label{last_section}

Suppose that $(M,\partial M,\bar{g},f)$ is a singular space
such that
\begin{equation}\label{4.0}
R_{\bar{g}}=0\mbox{ in }M~~\mbox{ and }~~H_{\bar{\gamma}}=0\mbox{ on }\partial M.
\end{equation}
We define the following functional:
\begin{equation}\label{4.1}
\mathcal{F}(g)=\int_M R_g fdV_{g}+2\int_{\partial M}H_\gamma fdA_{g}
\end{equation}
for $g\in \mathcal{M}$.
We have the following:

\begin{lemma}\label{lem4.1}
The metric $\bar{g}$ is a critical point of $\mathcal{F}$ defined in \eqref{4.1}.
\end{lemma}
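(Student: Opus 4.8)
The plan is to show that the first variation of $\mathcal{F}$ at $\bar{g}$ vanishes in every direction $h \in S_2(M)$. By definition of $\mathcal{S}_{\bar g}$ as the linearization of $\Psi$ and of $\mathcal{S}_{\bar g}^*$ as its $L^2$-adjoint, we have
\begin{equation*}
\left.\frac{d}{dt}\mathcal{F}(\bar g + th)\right|_{t=0}
= \left.\frac{d}{dt}\left(\int_M R_{\bar g+th} f\, dV_{\bar g+th} + 2\int_{\partial M} H_{\bar\gamma+th} f\, dA_{\bar\gamma+th}\right)\right|_{t=0}.
\end{equation*}
The subtlety is that the volume and area forms also depend on $t$, so this is not literally $\langle \mathcal{S}_{\bar g}(h), (f,f)\rangle$. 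First I would split the derivative by the product rule into the terms where $d/dt$ hits the curvature integrand and the terms where it hits the measures. The measure terms produce $\int_M R_{\bar g} f \cdot \tfrac12 (\mathrm{tr}_{\bar g} h)\, dV_{\bar g}$ and $2\int_{\partial M} H_{\bar\gamma} f \cdot \tfrac12 (\mathrm{tr}_{\bar\gamma} h|_{\partial M})\, dA_{\bar\gamma}$, both of which vanish identically because $R_{\bar g} = 0$ in $M$ and $H_{\bar\gamma} = 0$ on $\partial M$ by the standing hypothesis \eqref{4.0}.

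What remains is precisely the derivative with the measures held fixed, which by the very definition of $\mathcal{S}^*_{\bar g}$ in \eqref{1.2} equals
\begin{equation*}
\int_M (\delta R_{\bar g} h) f\, dV_{\bar g} + 2\int_{\partial M} (\delta H_{\bar\gamma} h) f\, dA_{\bar\gamma}
= \langle \mathcal{S}_{\bar g}(h), (f,f)\rangle = \langle h, \mathcal{S}^*_{\bar g}(f,f)\rangle = \langle h, \mathcal{S}^*_{\bar g}(f)\rangle.
\end{equation*}
Since $(M,\partial M,\bar g, f)$ is a singular space, $f \in \ker(\mathcal{S}^*_{\bar g})$, so $\mathcal{S}^*_{\bar g}(f) = 0$ and this pairing vanishes. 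Combining the two observations gives $\left.\tfrac{d}{dt}\mathcal{F}(\bar g+th)\right|_{t=0} = 0$ for all $h \in S_2(M)$, which is exactly the assertion that $\bar g$ is a critical point.

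The only real point requiring care is the bookkeeping of the measure-variation terms: one must be sure that differentiating $dV_{g}$ and $dA_{\gamma}$ contributes only terms proportional to $R_{\bar g}$ and $H_{\bar\gamma}$ respectively (times $f$ times a trace of $h$), so that \eqref{4.0} kills them cleanly; there is no contribution from the variation of $f$ since $f$ is a fixed function, not a functional of $g$. I would record the standard formulas $\tfrac{d}{dt} dV_{\bar g + th}|_{t=0} = \tfrac12 (\mathrm{tr}_{\bar g} h)\, dV_{\bar g}$ and the analogous one on the boundary, apply them, and invoke \eqref{4.0} and $f \in \ker(\mathcal{S}^*_{\bar g})$ to conclude. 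No further estimates or regularity arguments are needed.
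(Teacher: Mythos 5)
Your proof is correct and follows essentially the same route as the paper's: split the first variation into the curvature-integrand terms and the measure-variation terms, kill the latter using $R_{\bar g}=0$ and $H_{\bar\gamma}=0$, and then use $f\in\ker(\mathcal{S}^*_{\bar g})$ to annihilate the former via adjointness. The only difference is that you spell out the variation formulas for $dV_g$ and $dA_\gamma$ explicitly, whereas the paper leaves them implicit.
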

\begin{proof}
We compute
\begin{equation*}
\begin{split}
\left.\frac{d}{dt}\mathcal{F}(\bar{g}+th)\right|_{t=0}&=
\int_M(\delta R_{\bar{g}} h)f dV_{\bar{g}} +2\int_{\partial M}(\delta H_{\bar{\gamma}} h)f dA_{\bar{\gamma}}\\
&\hspace{4mm}+\int_M R_{\bar{g}} \left.\frac{\partial}{\partial t}dV_{\bar{g}+th}\right|_{t=0}+2\int_{\partial M}H_{\bar{\gamma}} \left.\frac{\partial}{\partial t}dA_{\bar{\gamma}+th}\right|_{t=0}\\
&=\langle\mathcal{S}_{\bar{g}}(h),(f,f)\rangle
=\langle h,\mathcal{S}_{\bar{g}}^*(f,f)\rangle=0,
\end{split}
\end{equation*}
where we have used
(\ref{4.0}) and the fact that $\mathcal{S}_{\bar{g}}^*(f,f)=0$.
This proves the assertion.
\end{proof}

From now on, we suppose that $(M,\partial M,\overline{g})$ is a compact $n$-dimensional manifold
which is flat (hence is Ricci-flat) and has totally geodesic boundary.
It follows from Proposition \ref{supseteq} and Proposition
\ref{Ricci-flat} that  $\overline{g}$ is singular and we can take $f\equiv 1$.
Then the functional $\mathcal{F}$ defined in (\ref{4.1}) becomes
\begin{equation}\label{4.2}
\mathcal{F}(g)=\int_M R_g dV_{g}+2\int_{\partial M}H_\gamma dA_{\gamma}.
\end{equation}

We will prove the following rigidity theorem.

\begin{theorem}\label{rigidity}
Let $(M,\partial M,\bar{g})$ be a compact $n$-dimensional manifold
which is flat and has totally geodesic boundary.
If $g$ is sufficiently closed to $\bar{g}$ such that\\
(i) $R_g\geq 0$ in $M$ and $H_\gamma\geq 0$ on $\partial M$,\\
(ii) $g$ and $\bar{g}$ induce the same metric on $\partial M$,\\
then $(M,\partial M,g)$ is also flat and has totally geodesic boundary.
\end{theorem}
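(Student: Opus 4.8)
The plan is to exploit the variational characterization from Lemma \ref{lem4.1} together with the perturbation theory built in Theorem \ref{prescribing}. Since $(M,\partial M,\bar g)$ is flat with totally geodesic boundary, Proposition \ref{Ricci-flat} gives $\ker(\mathcal S_{\bar g}^*)=\{\text{constant}\}$, so we may take $f\equiv 1$ and work with the functional $\mathcal F$ in \eqref{4.2}. First I would record that $\mathcal F(\bar g)=0$, and more generally that for \emph{any} metric $g$ inducing the same boundary metric as $\bar g$, the quantity $\mathcal F(g)$ is a fixed number independent of $g$: indeed, $\mathcal F$ is (twice) the Gauss--Bonnet/Chern--Gauss--Bonnet-type boundary integral whose value depends only on $\gamma=g|_{\partial M}$ — this is the content of the first variation computation in Lemma \ref{lem4.1}, which shows $\mathcal F$ is critical at $\bar g$ among \emph{all} variations, and in particular the restriction of $\mathcal F$ to metrics with fixed boundary metric is constant to first order. (If one prefers to avoid the topological identity, one can instead argue that any two such metrics can be joined by a path within the fixed-boundary-metric class and that $\frac{d}{dt}\mathcal F = \langle \mathcal S_g(h),(1,1)\rangle$; since the boundary metric is fixed, the variation $h$ restricted to $T\partial M$ vanishes, and one checks the derivative integrates against $\mathcal S_{\bar g}^*(1,1)$ which vanishes only at $\bar g$ — so some care is needed here. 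The cleanest route is the topological one: $\mathcal F(g)=\mathcal F(\bar g)=0$ whenever $g|_{\partial M}=\bar g|_{\partial M}$.)

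Granting $\mathcal F(g)=0$, hypothesis (i) says $R_g\ge 0$ in $M$ and $H_\gamma\ge 0$ on $\partial M$, so each of the two nonnegative integrals $\int_M R_g\,dV_g$ and $2\int_{\partial M}H_\gamma\,dA_\gamma$ must vanish, forcing $R_g\equiv 0$ in $M$ and $H_\gamma\equiv 0$ on $\partial M$. Now I would invoke Theorem \ref{presc1} in its \emph{linearized} form, or rather re-examine the proof of Theorem \ref{prescribing}: since $\bar g$ is singular with $\ker\mathcal S_{\bar g}^*=\{\text{constant}\}$, the decomposition \eqref{decomposition} holds and $\operatorname{Im}\mathcal S_{\bar g}=\Phi=\{(\phi_1,\phi_2):\int_M\phi_1\,dV_{\bar g}+\int_{\partial M}\phi_2\,dA_{\bar\gamma}=0\}$. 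The key point is a \emph{second-order} obstruction: because $\bar g$ is a critical point of $\mathcal F$ and the $L^2$-kernel is exactly the constants, any nearby metric $g$ with $R_g\ge 0$, $H_\gamma\ge 0$, and $\mathcal F(g)=\mathcal F(\bar g)=0$ must in fact have $(R_g,2H_\gamma)\in\ker$ of the projection, i.e. $R_g\equiv 0$ and $H_\gamma\equiv 0$ — which we already have — and then one must upgrade flatness of the boundary metric plus $R_g=0$, $H_\gamma=0$, $g$ close to flat, to full flatness of $g$.

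For this last upgrade — which I expect to be \textbf{the main obstacle} — the idea is: a scalar-flat metric with zero mean curvature on the boundary, sufficiently $C^2$-close to a flat metric with totally geodesic boundary, and inducing the \emph{same} flat metric on $\partial M$, must itself be flat with totally geodesic boundary. One approach is to run a Bochner/deformation argument: consider the one-parameter family and use that the second variation of $\mathcal F$ at $\bar g$ is given by $-\langle h,\mathcal S_{\bar g}^*\mathcal S_{\bar g}h\rangle$-type expression plus curvature terms, which under (i) and $\mathcal F(g)=0$ forces $\mathcal S_{\bar g}(h)=0$ and then the full linearized system forces $h$ to be (infinitesimally) a diffeomorphism plus a deformation preserving flatness; bootstrapping via the implicit function theorem applied to the map $g\mapsto (\operatorname{Ric}_g, II_\gamma)$ near $\bar g$ (whose linearization is elliptic with the appropriate boundary conditions, exactly as in the Shapiro--Lopatinskij discussion cited from \cite{cruz2019prescribing}) then pins down $g$ modulo diffeomorphism. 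Alternatively, and more in the spirit of the Fischer--Marsden rigidity results, one argues directly: since $R_g\equiv 0$ and $g$ is close to flat, a standard argument (test the scalar curvature equation against the constant, exploit that the flat metric is a strict local minimum of $\int|\operatorname{Ric}|^2$ among unit-volume metrics in a slice, etc.) gives $\operatorname{Ric}_g\equiv 0$; in dimension where flat $=$ Ricci-flat locally this is immediate for $n=3$, and for general $n$ one uses the closeness hypothesis together with the stability of the flat metric under Ricci-flat deformations with the given boundary conditions. Finally, $II_\gamma\equiv 0$ follows from $H_\gamma\equiv 0$ and the umbilicity forced by $g|_{\partial M}$ being totally geodesic in $\bar g$ together with closeness, completing the proof that $(M,\partial M,g)$ is flat with totally geodesic boundary.
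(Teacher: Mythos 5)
Your proposal goes wrong at the very first step, and the error propagates through the whole argument. You claim that $\mathcal F(g)=\int_M R_g\,dV_g+2\int_{\partial M}H_\gamma\,dA_\gamma$ is a ``Gauss--Bonnet/Chern--Gauss--Bonnet-type'' quantity depending only on the induced boundary metric, hence equal to $\mathcal F(\bar g)=0$ for every $g$ with $g|_{\partial M}=\bar g|_{\partial M}$. This is false in dimension $n\geq 3$: the total scalar curvature plus total mean curvature is not a topological (or boundary-data) invariant; it is not even scale-invariant, and the paper applies the theorem to $[0,1]\times T^n$ in arbitrary dimension. The Gauss--Bonnet interpretation is special to $n=2$. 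Your parenthetical ``alternative'' is also not sound: Lemma \ref{lem4.1} shows only that $\bar g$ is a \emph{critical point} of $\mathcal F$ (first variation vanishes \emph{at $\bar g$}), which does not make $\mathcal F$ locally constant along any curve of metrics, even with fixed boundary metric. Having a critical point is not the same as having a constant functional.

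Because of this, you never actually obtain $R_g\equiv 0$, $H_\gamma\equiv 0$, and your final paragraph (the ``upgrade'' from scalar-flat/minimal to flat/totally geodesic) remains a list of candidate strategies rather than a proof. None of them is carried out, and the cleanest of them is in fact exactly what the paper does — but in a way that does not need $\mathcal F(g)=0$ at all. The paper's proof works as follows: apply Proposition \ref{prop11} (Brendle--Marques) to pass to $h=\varphi^*g-\bar g$ that is divergence-free and vanishes on $\partial M$; Taylor-expand $\mathcal F(\varphi^*g)=\mathcal F(\bar g)+D\mathcal F_{\bar g}(h)+\tfrac12 D^2\mathcal F_{\bar g}(h,h)+E_3$ with $|E_3|\leq C\|h\|_{C^0}\int_M|\nabla_{\bar g}h|^2$; use $\mathcal F(\bar g)=0$ and $D\mathcal F_{\bar g}(h)=0$ (Lemma \ref{lem4.1}); compute, using flatness, $div_{\bar g}h=0$, and $h|_{\partial M}=0$, that $D^2\mathcal F_{\bar g}(h,h)=-\tfrac12\int_M|\nabla_{\bar g}h|^2\,dV_{\bar g}$; and finally use hypothesis (i), which gives only $\mathcal F(g)\geq 0$ (not $=0$), to get $0\leq -\tfrac12\int|\nabla_{\bar g}h|^2+E_3$. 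For $g$ sufficiently close to $\bar g$ the error term is dominated, forcing $\nabla_{\bar g}h=0$, hence $h$ constant, hence $h\equiv 0$ since it vanishes on $\partial M$. This is a genuine second-variation rigidity argument with a gauge-fixing step and a quantitative error bound; the inequality $\mathcal F(g)\geq 0$ is all that (i) provides and all that is needed. Your proposal misses both the gauge-fixing and the second-variation computation, which are the core of the proof.
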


To prove Theorem \ref{rigidity}, we need have the following proposition from \cite{Brendle&Marques}:

\begin{prop}[Proposition 11 in \cite{Brendle&Marques}]\label{prop11}
Let $M$ be a compact $n$-dimensional manifold with boundary $\partial M$.  Fix a real number $p>n$.
 If
$\|g-\bar{g}\|_{W^{2,p}(M,\bar{g})}$
is sufficiently small such that $g$ and $\bar{g}$ induce the same metric on $\partial M$,
then we can find a diffeomorphism
$\varphi: M\to M$ such that $\varphi|_{\partial M}=id$ and
$h=\varphi^*(g)-\bar{g}$ is divergence-free with respect to $\bar{g}$.  Moreover,
\begin{equation}\label{bound}
\|h\|_{W^{2,p}(M,\bar{g})}\leq N\|g-\bar{g}\|_{W^{2,p}(M,\bar{g})}
\end{equation}
where $N$ is a positive constant that depends only on $M$.
\end{prop}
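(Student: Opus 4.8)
The plan is to construct $\varphi$ by solving a Bianchi-gauge equation via the implicit function theorem in Banach spaces. Fix $p>n$ and let $\mathcal{V}$ be the Banach space of $W^{3,p}$ vector fields $V$ on $M$ with $V|_{\partial M}=0$; for $p>n$ such $V$ are $C^{2}$ up to the boundary. For small $V\in\mathcal{V}$ I would let $\varphi_V$ be the time-one flow of $V$, so that $\varphi_V$ is a $W^{3,p}$ diffeomorphism with $\varphi_0=\mathrm{id}$, $\left.\frac{d}{dt}\right|_{t=0}\varphi_{tV}=V$, and $\varphi_V|_{\partial M}=\mathrm{id}$ because $V$ vanishes on $\partial M$. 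Then define
\begin{equation*}
\mathcal{G}(V,g)=\mathrm{div}_{\bar g}\bigl(\varphi_V^{*}g-\bar g\bigr),
\end{equation*}
a map from a neighbourhood of $0$ in $\mathcal{V}$ times a neighbourhood of $\bar g$ in the $W^{2,p}$ metrics into the space of $W^{1,p}$ one-forms. Since $p>n$ these Sobolev spaces are multiplication algebras and the flow and pull-back depend smoothly on $(V,g)$, so $\mathcal{G}$ is smooth and $\mathcal{G}(0,\bar g)=0$. Solving $\mathcal{G}(V,g)=0$ and setting $\varphi=\varphi_V$ then gives exactly the conclusion: $h=\varphi^{*}g-\bar g$ is $\bar g$-divergence-free and $\varphi|_{\partial M}=\mathrm{id}$.

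\textbf{The linearised operator.} The crux is to show that $D_V\mathcal{G}(0,\bar g):\mathcal{V}\to W^{1,p}$ is an isomorphism. A computation gives
\begin{equation*}
D_V\mathcal{G}(0,\bar g)\cdot W=\mathrm{div}_{\bar g}\bigl(\mathcal{L}_W\bar g\bigr)=:LW,
\end{equation*}
and $L$ is a second-order operator whose principal symbol at a nonzero covector $\xi$ sends $W$ to $|\xi|^{2}W+\langle W,\xi\rangle\,\xi$ (up to a fixed constant), hence is invertible; thus $L$ is elliptic, and with the Dirichlet condition $W|_{\partial M}=0$ it satisfies the Lopatinskij--Shapiro complementing condition exactly as the Hodge Laplacian does. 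By Agmon--Douglis--Nirenberg theory, $L:\mathcal{V}\to W^{1,p}$ is Fredholm of index zero; it is formally self-adjoint, its associated quadratic form being $W\mapsto-\tfrac12\int_M|\mathcal{L}_W\bar g|^{2}\,dV_{\bar g}$, the boundary terms from integration by parts dropping because $W$ vanishes on $\partial M$. If $LW=0$, this form vanishes, so $W$ is a Killing field of $\bar g$; but a Killing field vanishing along the hypersurface $\partial M$ has vanishing $1$-jet there and is therefore identically zero. Hence $L$ is injective, and being Fredholm of index zero it is an isomorphism.

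\textbf{Conclusion and the bound.} Applying the implicit function theorem to $\mathcal{G}$ at $(0,\bar g)$ yields a neighbourhood of $\bar g$ in the $W^{2,p}$ metrics and a smooth (hence locally Lipschitz) map $g\mapsto V(g)\in\mathcal{V}$ with $V(\bar g)=0$ and $\mathcal{G}(V(g),g)=0$, so that
\begin{equation*}
\|V(g)\|_{W^{3,p}}\le C\,\|\mathcal{G}(0,g)\|_{W^{1,p}}=C\,\|\mathrm{div}_{\bar g}(g-\bar g)\|_{W^{1,p}}\le C'\,\|g-\bar g\|_{W^{2,p}}.
\end{equation*}
With $\varphi=\varphi_{V(g)}$ and $h=\varphi^{*}g-\bar g$ I then write $h=(\varphi^{*}g-\varphi^{*}\bar g)+(\varphi^{*}\bar g-\bar g)$, bound the first term using that pull-back by a $W^{3,p}$ diffeomorphism $C^{1}$-close to the identity is bounded on $W^{2,p}$, and bound the second by $\|\varphi^{*}\bar g-\bar g\|_{W^{2,p}}\le C\|V(g)\|_{W^{3,p}}$; combining gives \eqref{bound} with $N$ depending only on $M$ (and on $\bar g$, $p$).

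\textbf{Main obstacle.} The hardest part is the boundary value problem for $L$: verifying the Lopatinskij--Shapiro complementing condition for the Dirichlet data so that the ADN estimates and the Fredholm alternative apply in the $W^{k,p}$ scale, and checking that every boundary term produced by integration by parts genuinely vanishes under $W|_{\partial M}=0$, so that $\ker L$ is precisely the space of Killing fields of $\bar g$ (which then forces $\ker L=\{0\}$). A secondary technical point is the Sobolev bookkeeping ($g\in W^{2,p}$, $V\in W^{3,p}$, $\mathcal{G}(V,g)\in W^{1,p}$, with $p>n$) needed for $\mathcal{G}$ to be a well-defined smooth map between Banach manifolds.
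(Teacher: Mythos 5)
The paper does not actually prove this proposition: it is imported verbatim as Proposition 11 of \cite{Brendle&Marques}, so there is no internal proof to compare against. Your gauge-fixing argument --- implicit function theorem for $(V,g)\mapsto \mathrm{div}_{\bar g}\bigl(\varphi_V^{*}g-\bar g\bigr)$, with linearization $W\mapsto \mathrm{div}_{\bar g}(\mathcal{L}_W\bar g)$, an elliptic (Lam\'e-type) operator for which the Dirichlet condition satisfies Lopatinskij--Shapiro and whose kernel consists of Killing fields vanishing on $\partial M$, hence is trivial --- is essentially the proof given in Brendle--Marques, and the kernel and symbol computations you sketch are correct. The one point to treat with care is the claimed smoothness of $(V,g)\mapsto\varphi_V^{*}g$ between Sobolev spaces: composition/pullback is not smooth jointly in these topologies, so one should either regard $g$ as a parameter and run a quantitative implicit-function/contraction argument (which is also what gives the bound \eqref{bound} with a constant depending only on $M$, $\bar g$, $p$), or define the diffeomorphism via the exponential map as in the original source, rather than invoking joint smoothness of the flow map.
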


We are now ready to prove Theorem \ref{rigidity}.

\begin{proof}[Proof of Theorem \ref{rigidity}]
Suppose that $g$ and $\bar{g}$ are given as in Theorem \ref{rigidity}.
We can apply Proposition \ref{prop11} to get a diffeomorphism
$\varphi: M\to M$ such that $\varphi|_{\partial M}=id$,
$h=\varphi^*(g)-\bar{g}$ is divergence-free with respect to $\bar{g}$
and satisfies (\ref{bound}).
Note that
\begin{equation}\label{simplify}
h=\varphi^*(g)-\bar{g}=0~~\mbox{ on }\partial M,
\end{equation}
since $g$ and $\bar{g}$ induce the same metric on $\partial M$
and $\varphi|_{\partial M}=id$.
We compute
\begin{equation}\label{4.3}
\mathcal{F}(\varphi^*g)=\mathcal{F}(\bar{g})
+D\mathcal{F}_{\bar{g}}(h)
+\frac{1}{2}D^2\mathcal{F}_{\bar{g}}(h,h)+E_3,
\end{equation}
where $E_3$ is bounded by (see (7.11) in \cite{Case})
\begin{equation}\label{4.4}
|E_3|\leq C\|h\|_{C^0(M,\bar{g})}\int_M |\nabla_{\bar{g}} h|^2dV_{\overline{g}}
\end{equation}
for some constant $C$ depending only on $(M,\partial M,\overline{g})$,
thanks to (\ref{simplify}).
It follows from the assumption and  Lemma \ref{lem4.1} that
\begin{equation}\label{4.5}
\mathcal{F}({\bar{g}})=0~~\mbox{ and }~~D\mathcal{F}_{\bar{g}}(h)=0.
\end{equation}

We are going to compute $D^2\mathcal{F}_{\bar{g}}(h,h)$.
To this end, we have the following formula:  (see the last equation in P.124
of \cite{cruz2019prescribing})
\begin{equation*}
\begin{split}
&\int_M f\delta R_{\hat{g}}dV_{\hat{g}} +2\int_{\partial M}f\delta H_{\hat{\gamma}} hdA_{\hat{\gamma}}\\
&=\int_M\Big(-\Delta_{\hat{g}}f(tr_{\hat{g}}h)+\langle\mbox{Hess}_{\hat{g}}f,h\rangle-f\langle h,Ric_{\hat{g}}\rangle\Big)dV_{\hat{g}}\\
&\hspace{4mm}
+\int_M\Big(tr_{\hat{\gamma}}h\frac{\partial f}{\partial \nu}-f\langle II_{\hat{\gamma}},h\rangle_{\hat{\gamma}}\Big)dA_{\hat{\gamma}}
\end{split}
\end{equation*}
for any metric $\hat{g}$ and any smooth function $f$. In particular, if we take $f\equiv 1$ and $\hat{g}=\bar{g}+th$,
we have
\begin{equation*}
\begin{split}
&\int_M(\delta R_{\bar{g}+th} h)dV_{\bar{g}+th} +2\int_{\partial M}(\delta H_{\bar{\gamma}+th} h)dA_{\bar{\gamma}+th}\\
&=-\int_M\langle h,Ric_{\bar{g}+th}\rangle dV_{\bar{g}+th}
-\int_M\langle II_{\bar{\gamma}+th},h\rangle_{\bar{\gamma}+th}dA_{\bar{\gamma}+th}.
\end{split}
\end{equation*}
Differentiating it with respect to $t$, evaluating it at $t=0$ and
using the fact that $\overline{g}$ is flat with totally geodesic boundary, we obtain
\begin{equation}\label{4.6}
\begin{split}
D^2\mathcal{F}_{\bar{g}}(h,h)&=\frac{d}{dt}\left.\left(\int_M(\delta R_{\bar{g}+th} h)dV_{\bar{g}+th} +2\int_{\partial M}(\delta H_{\bar{\gamma}+th} h)dA_{\bar{\gamma}+th}\right)\right|_{t=0}\\
&=-\int_M\langle h,\left.\frac{\partial}{\partial t}(Ric_{\bar{g}+th})\right|_{t=0}\rangle dV_{\bar{g}}
-\int_M\langle \left.\frac{\partial}{\partial t}(II_{\bar{\gamma}+th})\right|_{t=0},h\rangle_{\bar{\gamma}}dA_{\bar{\gamma}}\\
&=-\int_M\langle h,\left.\frac{\partial}{\partial t}(Ric_{\bar{g}+th})\right|_{t=0}\rangle dV_{\bar{g}}
\end{split}
\end{equation}
where the last equality follows from (\ref{simplify}).
There holds (see (3.2) in \cite{lin2016deformations} for example)
\begin{equation}\label{4.7}
\left.\frac{\partial}{\partial t}(Ric_{\bar{g}+th})_{jk}\right|_{t=0}
=-\frac{1}{2}\big(\Delta_L h_{jk}+\nabla_j\nabla_k(tr_{\bar{g}}h)+\nabla_j(div_{\bar{g}} h)_k+\nabla_k(div_{\bar{g}} h)_j\big).
\end{equation}
Here the Licherowicz Laplacian acting on $h$ is defined as
\begin{equation}\label{4.8}
\Delta_Lh_{jk}=\Delta h_{jk}+2(\overset{\circ}{Rm}\cdot h)_{jk}-Ric_{ji}h^i_k-Ric_{ki}h^i_j,
\end{equation}
where the geometric quantities on the right hand side is with respect to $\bar{g}$.
Since $\bar{g}$ is flat and $div_{\bar{g}}h=0$,
it follows from  (\ref{4.6})-(\ref{4.8}) that
\begin{equation}\label{4.9.5}
D^2\mathcal{F}_{\bar{g}}(h,h)=\frac{1}{2}\int_M \Big(h_{jk}\Delta_{\bar{g}}h_{jk}+h_{jk}\nabla_j\nabla_k(tr_{\bar{g}}h)\Big) dV_{\bar{g}}.
\end{equation}
By integration by parts, (\ref{simplify})
and
the fact that $h$ is divergence-free with respect to $\bar{g}$,
we can rewrite (\ref{4.9.5}) as
\begin{equation}\label{4.9}
\begin{split}
D^2\mathcal{F}_{\bar{g}}(h,h)
&=\frac{1}{2}\int_M \Big(h_{jk}\Delta_{\bar{g}}h_{jk}-\nabla_jh_{jk}\nabla_k(tr_{\bar{g}}h)\Big) dV_{\bar{g}}=-\frac{1}{2}\int_M |\nabla_{\bar{g}}h|^2dV_{\bar{g}}.
\end{split}
\end{equation}

Now, we can combine (\ref{4.3}), (\ref{4.5}) and (\ref{4.9}) to obtain
\begin{equation}\label{4.10}
\mathcal{F}(\varphi^*g)=-\frac{1}{2}\int_M |\nabla_{\bar{g}} h|^2dV_{\bar{g}}+E_3
\end{equation}
where $E_3$ satisfies (\ref{4.4}).
By assumption (i) in Theorem \ref{rigidity} and the fact that $\varphi$
is a diffeomorphism, we have
\begin{equation}\label{4.11}
\mathcal{F}(\varphi^*g)=\mathcal{F}(g)\geq 0.
\end{equation}
Combining (\ref{4.10}) and (\ref{4.11}), we get
\begin{equation}\label{4.12}
0\leq -\frac{1}{2}\int_M |\nabla_{\bar{g}} h|^2dV_{\bar{g}}+E_3.
\end{equation}
In view of (\ref{bound}), (\ref{4.4}) and (\ref{4.12}), we can conclude
that $\nabla_{\bar{g}} h=0$ when $g$ is sufficiently closed to $\bar{g}$.
In particular, $h_{jk}$ is constant for each pair of $j,k$.
Since $h=0$ on $\partial M$ by (\ref{simplify}),
we must have $h=0$ in $M$. That is to say, $\varphi^*(g)=\tilde{g}$.
Hence, $(M,\partial M,g)$ is also flat and has totally geodesic boundary.
This finishes the proof of Theorem \ref{rigidity}.
\end{proof}

We remark that the second variation of
the functional defined in (\ref{4.2})
has been computed in \cite{Araujo} in general,
without assuming that $(M,\partial M,\bar{g})$ is Ricci-flat with totally geodesic boundary.

As we have seen in section \ref{section_example}, if
 $T^n$  is the  $n$-dimensional torus equipped with
the flat metric $g_0$, then $[0,1]\times T^n$ equipped with the product metric
$dt^2+g_0$ is flat and has geodesic boundary.
Combining this with Theorem \ref{rigidity}, we have the following
rigidity result:

\begin{theorem}
Consider $\tilde{M}=[0,1]\times T^n$ equipped with the product metric
$\tilde{g}=dt^2+g_0$, where $T^n$  is the  $n$-dimensional torus equipped with
the flat metric $g_0$. If $g$ is sufficiently closed to $\tilde{g}$ such that\\
(i) $R_g\geq 0$ in $\tilde{M}$ and $H_\gamma\geq 0$ on $\partial \tilde{M}$,\\
(ii) $g$ and $\tilde{g}$ induce the same metric on $\partial \tilde{M}$,\\
then $g$ is also flat and has totally geodesic boundary.
\end{theorem}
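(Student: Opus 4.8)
The plan is to obtain this theorem as an immediate application of Theorem \ref{rigidity}: the only thing that needs to be done is to verify that $(\tilde M,\partial\tilde M,\tilde g)$, with $\tilde M=[0,1]\times T^n$ and $\tilde g=dt^2+g_0$, satisfies the two standing hypotheses of Theorem \ref{rigidity}, namely that it is a compact manifold with boundary which is flat and has totally geodesic boundary. Once this is checked, one simply quotes Theorem \ref{rigidity} (with the ``$n$'' there being the dimension $n+1$ of $\tilde M$) and there is nothing further to prove.

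First I would verify flatness. Since $g_0$ is flat on $T^n$ and $dt^2$ is flat on $[0,1]$, and the Riemann curvature tensor of a Riemannian product is the direct sum of the curvature tensors of the factors, the product metric $\tilde g=dt^2+g_0$ has identically vanishing curvature; in particular it is Ricci-flat. Next I would check that $\partial\tilde M=(\{0\}\times T^n)\cup(\{1\}\times T^n)$ is totally geodesic: each component is a level set of the coordinate function $t$, which satisfies $|\nabla_{\tilde g}t|\equiv 1$, and the second fundamental form of such a level set equals $\mathrm{Hess}_{\tilde g}\,t$, which vanishes because $t$ is an affine coordinate of the flat product. Hence both boundary components are totally geodesic. (These facts were already recorded in section \ref{section_example}.)

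With the hypotheses in hand, I would invoke Theorem \ref{rigidity}: if $g$ is sufficiently close to $\tilde g$ and satisfies (i) $R_g\geq 0$ in $\tilde M$ and $H_\gamma\geq 0$ on $\partial\tilde M$, and (ii) $g$ and $\tilde g$ induce the same metric on $\partial\tilde M$, then $(\tilde M,\partial\tilde M,g)$ is flat with totally geodesic boundary, which is exactly the assertion. The only ``obstacle'' is the trivial one of matching conventions — the statement here is phrased with the torus of dimension $n$ so that $\tilde M$ has dimension $n+1$, whereas Theorem \ref{rigidity} is stated for an ambient dimension ``$n$''; this is a harmless relabeling. All the substantive work (the critical point property of $\bar g$ for $\mathcal{F}$ via Lemma \ref{lem4.1}, the computation $D^2\mathcal F_{\bar g}(h,h)=-\tfrac12\int_M|\nabla_{\bar g}h|^2\,dV_{\bar g}$, and the argument forcing $h\equiv 0$ from the sign conditions together with $h|_{\partial M}=0$) is already carried out in the proof of Theorem \ref{rigidity}, so nothing new is required.
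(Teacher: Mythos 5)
Your proposal is correct and matches the paper's own treatment: the paper derives this theorem as an immediate corollary of Theorem \ref{rigidity}, after noting (in Section \ref{section_example}) that $[0,1]\times T^n$ with the product metric $dt^2+g_0$ is flat with totally geodesic boundary. Your explicit verifications of these two hypotheses and the remark about the dimension relabeling are all that is needed, and nothing further is required.
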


\section*{Acknowledgement}

The authors would like to thank Prof. Yueh-Ju Lin for answering questions on her paper. Part of the work was done when the first author was visiting National Center for Theoretical Sciences in Taiwan, and he is grateful for the kind hospitality. The first author is supported by Basic Science Research Program through the National Research Foundation of Korea (NRF) funded by the Ministry of Education, Science and Technology (2019041021), and by Korea Institute for Advanced Study (KIAS) grant funded by the Korea government (MSIP). The second author is supported by Ministry of Science and Technology, Taiwan, with the grant number: 108-2115-M-024-007-MY2.

\bibliographystyle{amsplain}

\end{document}